\newcommand{\p}{\partial}
\newcommand{\m}{\mathfrak{m}}
\newcommand{\g}{\mathfrak{g}}
\newcommand{\h}{\mathfrak{h}}
\newcommand{\mk}{\mathfrak{k}}
\newcommand{\s}{\mathfrak{sl}}
\newcommand{\gl}{\mathfrak{gl}}
\newcommand{\so}{\mathfrak{so}}
\newcommand{\mo}{\mathfrak{o}}
\newcommand{\spl}{\mathfrak{sp}}
\newcommand{\pgl}{\mathfrak{pgl}}
\newcommand{\psl}{\mathfrak{psl}}
\newcommand{\rad}{\mathfrak{rad}}
\newcommand{\ssimp}{\mathrm{ss}}
\newcommand{\bI}{\mathbb{I}}
\newcommand{\bV}{\mathbb{V}}
\newcommand{\bZ}{\mathbb{Z}}
\newcommand{\bN}{\mathbb{N}}
\newcommand{\moM}{\mo(M)}
\newcommand{\omoM}{\overline{\mo}(M)}
\newcommand{\moN}{\mo(N)}
\newcommand{\moI}{\mo(\mathbb{I}_{n})}
\newcommand{\moD}{\mo(D)}
\newcommand{\HomV}{\mathrm{Hom}\left(\bV_{\moN},\bV_{\gl_{m}}\right)}
\newcommand{\HomkV}{\mathrm{Hom}\left(\bV_{\mk},\bV_{\h}\right)}
\theoremstyle{plain}
\newtheorem{thm}{Theorem}[section]
\newtheorem{prop}[thm]{Proposition}
\theoremstyle{definition}
\theoremstyle{remark}
\newtheorem{remi}[thm]{Remark}
\theoremstyle{plain}
\newtheorem{mthm}{Theorem}
\newtheorem{mprop}[mthm]{Proposition}
\begin{document}

% Title and Abstract <<<
%\title{Lie algebras and graded algebras associated to degenerate bilinear forms}
\title{The Lie algebra preserving a degenerate bilinear form}
\author{James Waldron}
\date{\today}

\begin{abstract}
Let $k$ be an arbitrary field and $d$ a positive integer.
For each degenerate symmetric or antisymmetric bilinear form $M$ on $k^{d}$ we determine the structure of the Lie algebra of matrices that preserve $M$, and of the Lie algebra of matrices that preserve the subspace spanned by $M$.
We show that these Lie algebras are semidirect products of classical Lie algebras and certain representations, and determine their radicals, derived series and semisimple quotients.
Our main motivation and application is to determine the structure of the graded Lie algebra of derivations of each commutative or graded commutative algebra with Hilbert polynomial $1+dt+t^{2}$.
Some of our results apply to more general bilinear forms and graded algebras. 
\end{abstract}

\maketitle

% end section >>>

% Introduction <<<
\section{Introduction}
\label{sec: intro}
Let $k$ be a field and $d$ a positive integer.
To each $d \times d$ matrix $M$ we can associate the following Lie subalgebras of the Lie algebra $\gl_{d}$ of $d\times d$ matrices:
\begin{align*}
\moM & :=\left\{X \in \gl_{d} \; | \; X^{t}M+MX=0\right\} \\
\omoM &  :=\left\{X \in \gl_{d} \; | \; X^{t}M+MX \in kM\right\}
\end{align*}
where $X^t$ denotes the transpose of $X$ and $kM$ the linear span of $M$.
The formula $X\cdot M = X^{t}M + MX$ defines an action of $\gl_{d}$ on the space of bilinear forms on $k^{d}$, where the matrix $M$ corresponds to the bilinear form $\left(v,w\right) = v^{t}Mw$.  
From this perspective $\moM$ is the stabilizer in $\gl_{d}$ of $M$, and $\omoM$ is the stabilizer in $\gl_{d}$ of the subspace spanned by $M$.

For example, if $M=0$ then $\omoM=\moM=\gl_{d}$.
At the other extreme, if $M=\bI_{d}$ is the identity matrix then $\moM$ is the Lie algebra of antisymmetric matrices and isomorphic to $\so_{d}$ if $\mathrm{char}\,k\ne 2$.
If $d=2n$ is even and $M$ is equal to the matrix 
\begin{equation*}
\Pi_{2n} := 
\begin{pmatrix}
0&\bI_{n}\\
-\bI_{n}&0
\end{pmatrix}
\end{equation*}	
then $\moM = \spl_{2n}$ is the symplectic Lie algebra of size $2n$. 
If every element in $k$ is a square and $M$ is nondegenerate and either symmetric or antisymmetric then $\moM$ is isomorphic to one of the latter two examples, and except for some low dimensional exceptions either $\moM$, or if $\mathrm{char}\,k=2$ a quotient of a derived subalgebra of $\moM$ is simple, see Appendix \ref{sec: bg} and references therein.

In this paper we study in detail the structure of the Lie algebras $\moM$ and $\omoM$ when $M$ is a \emph{singular} symmetric or antisymmetric matrix.
In this case $M$ corresponds to a \emph{degenerate} symmetric or antisymmetric bilinear form.
We show that these Lie algebras are semidirect products of classical Lie algebras and certain representations, determine their radicals, semisimple quotients and derived series, and determine the relation between $\omoM$ and $\mo{\left(M\right)}$.
Some of our results apply to more general $M$, in particular to matrices which are congruent to the block diagonal sum of a zero matrix and a nonsingular matrix.

Our main motivation and application and is to determine the structure of the Lie algebra of derivations of each commutative or graded commutative graded $k$-algebra $A$ with Hilbert polynomial $1+dt+t^{2}$.
The commutative $k$-algebras that admit such a grading coincide with the local $k$-algebras with residue field $k$ and maximal ideal $\m$ satisfying $\mathrm{dim}_{k} \, \m/\m^{2} = d$, $\mathrm{dim}_{k} \, \m^{2} = 1$ and $\m^{3}=0$.
These algebras appear in the classification of low dimensional commutative algebras completed by Poonen \cite{Poonen08} as algebras of type $\vec{d}=\left(d,1\right)$ in the notation of loc.\ cit.\
As explained in \cite[Ex.\ 1.5]{Poonen08} and recalled in \S \ref{sec: mainder} below, the multiplication in an algebra of this type is determined by a $d \times d$ square matrix $M$. 
We show that $\omoM$ corresponds to the Lie algebra of degree zero graded derivations of $A$ and $\moM$ to the degree zero graded derivations that act trivially on $\m^{3}$.
Apart from two exceptions related to the Witt algebras in characteristic 2 and 3 (see Remark \ref{rem: Witt} for the definition), the full Lie algebra of derivations is a semidirect product of $\omoM$ and a certain representation.

% notation <<<
\subsection{Notation}
\label{sec: set-up}
Before stating our main results we fix some notation.
We continue to use the notation $k$, $d$, $\moM$, $\omoM$, $\bI_{n}$ and $\Pi_{2n}$ established in the introduction. 
The letters $m,n$ and $i$ always denote integers, with $m,n \ge 0$. 

We denote the vector space of $m\times n$ matrices over $k$ by $k^{m\times n}$, the span of a matrix $M \in k^{m\times n}$ by $kM$ or $k(M)$, and for $m=n$ the $m\times m$ zero matrix by $0_{m}$.

Two matrices $M,M' \in k^{d\times d}$ are called congruent if there exists an invertible matrix $g \in \mathrm{GL}_{d}\left(k\right)$ such that $g^tMg=M'$, in which case $\mo{\left(M\right)} \cong \mo\left(M'\right)$ and $\overline{\mo}{\left(M\right)} \cong \overline{\mo}\left(M'\right)$ via $X\mapsto g^{-1}Xg$. 
If $M\in k^{d\times d}$ is either symmetric or antisymmetric then $M$ is congruent to a block-diagonal matrix of exactly one of the forms
\begin{align*}
0_{m}\oplus \bI_{n} & :=
\begin{pmatrix}
0_{m} & 0 \\
0 & D
\end{pmatrix} 
\;\;,\;\; d=m+n \\
0_{m}\oplus \Pi_{2n} & :=
\begin{pmatrix}
0_{m} & 0 \\
0 & \Pi_{2n}
\end{pmatrix} 
\;\;,\;\; d=m+2n
\end{align*}
where $D \in k^{n\times n}$ is a nonsingular diagonal matrix \cite{Albert38,KaplanskyBook03}.
If the determinant of $D$ is a square in $k$ (e.g.\ if $k$ is algebraically closed) then $D$ is congruent to $\bI_{n}$, and if moreover $\mathrm{char}\,k \ne 2$ then $\moD \cong \so_{n}$.
(Over a field of $\mathrm{char} \ne 2$ the Lie algebra $\so_{n}$ is often defined using a non-diagonal matrix that is congruent to $\bI_{n}$ \cite[\S I]{HumphreysBook72}.)
This condition on $D$ is equivalent to the condition that the discriminant of the corresponding bilinear form is the identity element in the square class group $k^{\times} / k^{\times 2}$.

See \cite{HumphreysBook72} for generalities on Lie algebras.
We denote the centre of a Lie algebra $\g$ by $Z(\g)$, the radical by $\rad(\g)$, the maximal semisimple quotient $\g/\rad(\g)$ by $\g^{\ssimp}$ and the $i$'th derived subalgebra by $\g^{(i)}$, $i\ge 1$.

If $V$ is a representation of a Lie algebra $\g$ then we denote the action of $x\in \g$ on $v \in V$ by $x\cdot v$.
We denote by $\g V$ the subrepresentation spanned by $\left\{x\cdot v \; | \; x \in \g,v\in V\right\}$, and by $\g \ltimes V$ the semidirect product Lie algebra where $V$ is considered as an abelian Lie algebra. 
 
If $V$ resp.\ $W$ is a representation of the Lie algebra $\g$ resp.\ $\h$ then we consider the vector space $\mathrm{Hom}\left(W,V\right)$ as a representation of $\g\oplus \h$ with action $\left(\left(x,y\right)\cdot \phi\right) \left(w\right) = x\cdot \phi\left(v\right) -\phi\left(y\cdot w\right)$.
If $\g=\h$ then we also consider $\mathrm{Hom}\left(V,W\right)$ as a representation of $\g$ with action given by $\left(x\cdot \phi\right)\left(v\right) = x\cdot\phi\left(v\right)-\phi\left(x\cdot v\right)$.

We denote the restriction of the vector representation of $\gl_{n}$ to a subalgebra $\g \subseteq \gl_{n}$ by $\bV_{\g}$.
If $\h \subsetneq \g \subseteq \gl_{n}$ are subalgebras then we occasionally denote $\bV_{\h}$ by $\bV_{\g}$.
For $\lambda \in k$ we denote by $k_{\lambda}$ the representation of $\gl_{1}$ on $k$ where $1\in \gl_{1}$ acts by multiplication by $\lambda$. 
% end subsection >>>
\newpage
% main results <<<
\subsection{The Lie algebras $\moM$ and $\omoM$}
\label{sec: maingen}
%
% main thm: table <<<
\begin{mthm}
\label{thm: table}
For $d=m+n$ and $M=0_{m}\oplus D$ with $D$ a nonsingular diagonal matrix, or $d=m+2n$ and $M=0_{m}\oplus\Pi_{2n}$, the structure of the Lie algebras $\moM$ and $\omoM$ is given in the following table:
%%%%%%% table %%%%%%%%%
\[
	\begin{array}{|c|c|c|c|c|}
	\hline
	M
	&\mathrm{char}\,k
	&\moM
	&\text{\emph{Isomorphism type of }} \moM 
	&\omoM 
	\\ \hline
%%%%%%%%%%%%%%%%%%%%%%%%%%%%%%%
	\begin{pmatrix}
	0_{m}&0\\
	0&D
	\end{pmatrix}
	&2
	&\begin{pmatrix}
	\gl_{m}& k^{m\times n} \\
	0& \moD 
	\end{pmatrix}
	&\left(\gl_{m}\oplus \moD\right) \ltimes \mathrm{Hom}\left(\bV_{\moD} , \bV_{\gl_{n}}\right)
	& \moM
	\\ 
%%%%%%%%%%%%%%%%%%%%%%%%%%%%%%%
	&\ne 2
	& ''
	& ''
	& \moM \oplus k\bI_{m+n}
	\\ \hline
%%%%%%%%%%%%%%%%%%%%%%%%%%%%%%%
	\begin{pmatrix}
	0_{m}&0\\
	0&\Pi_{2n}
	\end{pmatrix}
	&2
	& \begin{pmatrix}
	\mathfrak{gl}_{m} & k^{m\times 2n}\\
	0 & \spl_{2n} 
	\end{pmatrix}
	&\left(\gl_{m}\oplus \spl_{2n}\right) \ltimes \mathrm{Hom}\left(\bV_{\spl_{2n}} , \bV_{\gl_{n}}\right)
	& \moM \rtimes k  \, (0_{m+n}\oplus \bI_{n})
	\\ 
%%%%%%%%%%%%%%%%%%%%%%%%%%%%%%%
	&\ne2
	& ''
	& ''
	& \moM \oplus k\bI_{m+2n}
	\\ \hline
	\end{array}
\]
\end{mthm}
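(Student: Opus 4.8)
The plan is to read off both Lie algebras from a direct block computation of $X^{t}M+MX$ relative to the decomposition $k^{d}=k^{m}\oplus k^{r}$, with $r=n$ in the symmetric case and $r=2n$ in the symplectic case, and then to recognise the resulting matrix algebras. Write $X=\bigl(\begin{smallmatrix}A&B\\ C&E\end{smallmatrix}\bigr)$ and $M=\bigl(\begin{smallmatrix}0_{m}&0\\ 0&D'\end{smallmatrix}\bigr)$, where $D'$ equals $D$ or $\Pi_{2n}$ and in particular is invertible and nonzero; a block multiplication gives
\[
X^{t}M+MX=\begin{pmatrix}0 & C^{t}D'\\ D'C & E^{t}D'+D'E\end{pmatrix}.
\]
Hence $X\in\moM$ if and only if $C=0$ and $E\in\mo(D')$, with $A\in\gl_{m}$ and $B\in k^{m\times r}$ arbitrary; since $\mo(D')$ is $\moD$ resp.\ $\spl_{2n}$ this is the matrix algebra in the third column, and the computation does not depend on $\mathrm{char}\,k$.

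In this algebra the block-diagonal matrices form a subalgebra isomorphic to $\gl_{m}\oplus\mo(D')$, the strictly block-upper-triangular matrices form an abelian ideal that I identify with $k^{m\times r}$, and the bracket identity $\bigl[\bigl(\begin{smallmatrix}A&0\\ 0&E\end{smallmatrix}\bigr),\bigl(\begin{smallmatrix}0&B\\ 0&0\end{smallmatrix}\bigr)\bigr]=\bigl(\begin{smallmatrix}0&AB-BE\\ 0&0\end{smallmatrix}\bigr)$ identifies that ideal, with $A$ acting on the target $k^{m}$ by the vector representation of $\gl_{m}$ and $E$ acting on the source $k^{r}$ by the vector representation of $\mo(D')$, with the representation $\mathrm{Hom}\bigl(\bV_{\mo(D')},\bV_{\gl_{m}}\bigr)$ of \S\ref{sec: set-up}. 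Thus $\moM\cong\bigl(\gl_{m}\oplus\mo(D')\bigr)\ltimes\mathrm{Hom}\bigl(\bV_{\mo(D')},\bV_{\gl_{m}}\bigr)$, which is the fourth column.

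For the last column, send $X\in\omoM$ to the scalar $\lambda(X)\in k$ with $X^{t}M+MX=\lambda(X)M$. This map is linear with kernel $\moM$, and $\moM$ is an ideal of $\omoM$ because the action identity $[X,Y]\cdot M=X\cdot(Y\cdot M)-Y\cdot(X\cdot M)$ forces $\lambda([X,Y])M=0$. So $\moM\trianglelefteq\omoM$ with codimension $0$ or $1$, and $\omoM=\moM$ precisely when no $X\in\gl_{d}$ satisfies $X^{t}M+MX=M$. If $\mathrm{char}\,k\ne2$ then $\bI_{d}$ does, with $\bI_{d}^{t}M+M\bI_{d}=2M\ne0$, and $\bI_{d}$ is central in $\gl_{d}$, so $\omoM=\moM\oplus k\bI_{d}$. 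If $\mathrm{char}\,k=2$ and $M=0_{m}\oplus D$, the $(i,i)$ entry of $E^{t}D+DE$ equals $2d_{i}E_{ii}=0$ and cannot match the nonzero $(i,i)$ entry $d_{i}$ of $D$, so no such $X$ exists and $\omoM=\moM$. If $\mathrm{char}\,k=2$ and $M=0_{m}\oplus\Pi_{2n}$, a direct check gives $N^{t}M+MN=M$ for $N:=0_{m+n}\oplus\bI_{n}$, so $kN$ is a one-dimensional subalgebra complementing the ideal $\moM$; since $\mathrm{ad}(N)$ does not annihilate the off-diagonal block $k^{m\times2n}$, the line $kN$ is not central and $\omoM=\moM\rtimes kN$.

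The routine part is the block algebra computation; the step needing care is the characteristic-$2$ part of the last column — seeing why a nonsingular symmetric form admits no infinitesimal rescaling, via the diagonal obstruction above, while $\Pi_{2n}$ does because its diagonal vanishes, and then producing the correct non-scalar complement $N$ and checking it is non-central, so that $\omoM$ is a genuine semidirect product and not a direct sum of Lie algebras.
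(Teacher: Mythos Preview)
Your proof is correct and follows essentially the same approach as the paper: the paper derives the $\moM$ columns from Theorem~\ref{thm: gen}\eqref{gen: mat}\&\eqref{gen: isom} and the $\omoM$ column from Proposition~\ref{prop: barses}, whose proofs are precisely the block computation, the semidirect-product identification, and the scalar map $X\mapsto\lambda(X)$ that you carry out inline. The only work beyond those general results---the characteristic-$2$ dichotomy---is handled by the same diagonal-entry obstruction for $0_m\oplus D$ and the same explicit complement $N=0_{m+n}\oplus\bI_n$ for $0_m\oplus\Pi_{2n}$ that you use.
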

% >>>
% remark <<<
\begin{remi}
Theorem \ref{thm: table} provides examples of non-congruent matrices $M$ and $M'$ with $\moM \cong \mo\left(M'\right)$.
For example, if $\mathrm{char}\,k\ne 2$, $d=3$, $M=0_{2} \oplus \bI_{1}$ and $M'=0_{1} \oplus \Pi_{2}$, then there is an isomorphism
\[
\gl_{2} \ltimes \bV_{\gl_{2}}
\cong \left(\gl_{1} \oplus \spl_{2}\right) \ltimes \mathrm{Hom}\left(\bV_{\spl_{2}},\bV_{\gl_{1}}\right)
\]
arising from the isomorphisms $\gl_{2} \cong \gl_{1} \oplus \s_{2}$ and $\s_{2} \cong \spl_{2}$. 
\end{remi}
% >>>
Theorem \ref{thm: table} is an application of the following more general result.
% main thm: general case <<<
\begin{mthm}
\label{thm: gen}
Suppose that $d=m+n$, $N$ is a nonsingular $n\times n$ matrix, and $M=0_{m}\oplus N$. 
\begin{enumerate}
\item \label{gen: mat} $\moM$ consists of the matrices of the form 
\[
	\begin{pmatrix}
	\gl_{m} & k^{m\times n} \\
	0 & \moN
	\end{pmatrix}.
\]
\item \label{gen: isom} $\moM \cong \left(\gl_{m} \oplus \moN\right) \ltimes \HomV$.
\item \label{gen: rad} $\rad(\moM) \cong \left(\rad\left(\gl_{m}\right) \oplus \rad\left(\moN\right)  \right)  \ltimes \HomV$.
\item \label{gen: ss} $\moM^{\ssimp} \cong \left(\gl_{m}\right)^{\ssimp} \oplus \moN^{\ssimp}$.
\item \label{gen: der} $\moM^{\left(1\right)} \cong \left(\s_{m} \oplus \moN^{\left(1\right)}\right) \ltimes \mathrm{Hom}\left(\bV_{\moN^{\left(1\right)}} , \bV_{\s_{m}}\right)$.
\item \label{gen: der2} Let $i\ge 2$. Assume that one of the following conditions holds:
\begin{enumerate} 
	\item[(i)] $\mathrm{char} \, k = 2$ and $m\ge 3$. 
	\item[(ii)] $\mathrm{char} \, k \ne 2$ and $m \ge 2$. 
	\item[(iii)] $\moN^{\left(i-1\right)} \bV_{\moN} = \bV_{\moN}$. 
\end{enumerate}
Then $\moM^{\left(i\right)} \cong \left(\s_{m}^{\left(i-1\right)} \oplus \moN^{\left(i\right)}\right) \ltimes \mathrm{Hom}\left(\bV_{\moN^{\left(i\right)}} , \bV_{\s_{m}^{\left(i-1\right)}}\right)$.
\end{enumerate}
\end{mthm}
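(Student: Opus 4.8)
The plan is to establish parts \eqref{gen: mat}--\eqref{gen: ss} by direct computation with the block form, and then to bootstrap the derived-series parts \eqref{gen: der}--\eqref{gen: der2} from a single algebraic lemma about derived subalgebras of semidirect products $\g \ltimes V$. First I would prove \eqref{gen: mat}: writing $X \in \gl_{d}$ in block form $\left(\begin{smallmatrix} A & B \\ C & E\end{smallmatrix}\right)$ with $A \in \gl_{m}$, $E \in \gl_{n}$, the condition $X^{t}M + MX = 0$ with $M = 0_{m} \oplus N$ unpacks into four block equations; since $N$ is nonsingular, the $(2,1)$-block equation forces $C = 0$, the $(1,1)$-block is vacuous, the $(1,2)$-block forces nothing on $B$ (it gives $0 = 0$), and the $(2,2)$-block is exactly $E \in \moN$. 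Hence $\moM = \left\{\left(\begin{smallmatrix} A & B \\ 0 & E\end{smallmatrix}\right) : A \in \gl_{m},\, B \in k^{m \times n},\, E \in \moN\right\}$, which is \eqref{gen: mat}. For \eqref{gen: isom}, the subalgebra of block-diagonal matrices is $\gl_{m} \oplus \moN$, the strictly-upper-triangular part $\left\{\left(\begin{smallmatrix} 0 & B \\ 0 & 0\end{smallmatrix}\right)\right\}$ is an abelian ideal, and the bracket $\left[\left(\begin{smallmatrix} A & 0 \\ 0 & E\end{smallmatrix}\right), \left(\begin{smallmatrix} 0 & B \\ 0 & 0\end{smallmatrix}\right)\right] = \left(\begin{smallmatrix} 0 & AB - BE \\ 0 & 0\end{smallmatrix}\right)$ identifies this ideal, as a $\gl_{m} \oplus \moN$-module, with $\HomV$ under $B \mapsto (v \mapsto Bv)$, using the action convention $((x,y)\cdot\phi)(w) = x\cdot\phi(w) - \phi(y\cdot w)$ fixed in \S\ref{sec: set-up}. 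This gives the semidirect-product decomposition.

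For \eqref{gen: rad} and \eqml{gen: ss}: the abelian ideal $\HomV$ is solvable, and $\left(\gl_{m} \oplus \moN\right) \ltimes \HomV$ modulo it is $\gl_{m} \oplus \moN$, so $\rad(\moM)$ is the preimage of $\rad(\gl_{m} \oplus \moN) = \rad(\gl_{m}) \oplus \rad(\moN)$ under the quotient map; since the extension splits, this preimage is $\left(\rad(\gl_{m}) \oplus \rad(\moN)\right) \ltimes \HomV$, which is \eqref{gen: rad}, and the further quotient is $(\gl_{m})^{\ssimp} \oplus \moN^{\ssimp}$, which is \eqref{gen: ss}. The one point requiring a word of care here is that $\HomV$ really is contained in the radical of the whole Lie algebra and not merely a solvable ideal of an ad hoc complement — but it is an ideal of $\moM$ with abelian (hence solvable) quotient-relevant behaviour, so it lies in $\rad(\moM)$, and the standard fact that $\rad$ of a quotient is the image of $\rad$ finishes it.

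The heart of the matter is \eqref{gen: der}--\eqref{gen: der2}, and I expect the main obstacle to be controlling $\left(\g \ltimes V\right)^{(i)}$ in terms of $\g^{(i)}$ and the submodule $\g^{(i-1)} V$. The key lemma I would isolate is: for a Lie algebra $\g$ with module $V$, one has $\left(\g \ltimes V\right)^{(1)} = \g^{(1)} \ltimes \g V$, and inductively $\left(\g \ltimes V\right)^{(i)} = \g^{(i)} \ltimes \g^{(i-1)} V$ for $i \ge 1$ (with $\g^{(0)} := \g$), because $[\g \ltimes V, \g \ltimes V]$ picks up $[\g,\g]$ in the $\g$-direction and $\g V + [V,V] = \g V$ in the $V$-direction, and one iterates. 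Applying this with $\g = \gl_{m} \oplus \moN$ and $V = \HomV$: since $\gl_{m}^{(1)} = \s_{m}$ and $(\gl_{m} \oplus \moN)$ acts on $\HomV = \mathrm{Hom}(\bV_{\moN}, \bV_{\gl_m})$ through its action on $\bV_{\gl_m} = k^m$ on the left and $\bV_{\moN}$ on the right, I must check that $(\gl_{m} \oplus \moN) \cdot \HomV = \mathrm{Hom}(\bV_{\moN}, \s_m \cdot \bV_{\gl_m}) = \mathrm{Hom}(\bV_{\moN}, k^m)$ — i.e.\ that $\s_m k^m = k^m$ (true for all $m \ge 1$ since $\s_m$ contains elementary matrices $E_{ij}$, $i \ne j$) — and then that the first derived subalgebra is $(\s_m \oplus \moN^{(1)}) \ltimes \mathrm{Hom}(\bV_{\moN}, k^m)$; but $\mathrm{Hom}(\bV_{\moN}, k^m)$ as an $\s_m \oplus \moN^{(1)}$-module: the $\moN$-action factors through $\moN^{(1)}$ only on the submodule $\moN^{(1)} \bV_{\moN}$, which need not be all of $\bV_{\moN}$, yet $\mathrm{Hom}$ in the first slot only sees where $\moN$ acts \emph{on the target}, so after one derivation the relevant module is $\mathrm{Hom}(\bV_{\moN^{(1)}}, \bV_{\s_m})$ in the notation of the paper (the source being written $\bV_{\moN^{(1)}}$ to record that we now regard it as a $\moN^{(1)}$-module by restriction). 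This bookkeeping — distinguishing "the module $V$ on which $\g^{(1)}$ happens to act" from "$\g^{(1)} V$" — is precisely where conditions (i), (ii), (iii) of part \eqref{gen: der2} enter: for $i \ge 2$ one needs $\s_m^{(i-1)} \bV_{\s_m} = \bV_{\s_m}$ to keep the target full (which holds iff $\s_m^{(i-1)} \ne 0$ acting irreducibly, giving the char-dependent bounds $m \ge 2$ or $m \ge 3$), or else condition (iii) $\moN^{(i-1)}\bV_{\moN} = \bV_{\moN}$ to stabilize the source; given either, the lemma yields $\moM^{(i)} = (\s_m^{(i-1)} \oplus \moN^{(i)}) \ltimes \mathrm{Hom}(\bV_{\moN^{(i)}}, \bV_{\s_m^{(i-1)}})$ by a straightforward induction, which is \eqref{gen: der2}. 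I would present the lemma and its proof first, then feed in the explicit modules; the only genuinely delicate verification is that the $V$-component of the $i$-th derived subalgebra is exactly $\g^{(i-1)}V$ and not something smaller, which follows because $[\g^{(i-1)} \ltimes \g^{(i-2)}V,\ \g^{(i-1)} \ltimes \g^{(i-2)}V] \supseteq [\g^{(i-1)}, \g^{(i-2)}V] = \g^{(i-1)} \cdot \g^{(i-2)} V$ — and here one uses that for these specific modules $\g^{(i-1)}(\g^{(i-2)}V) = \g^{(i-1)}V$, i.e.\ the chain of submodules $V \supseteq \g V \supseteq \g^{(1)}V \supseteq \cdots$ stabilizes at the step controlled by the stated hypotheses.
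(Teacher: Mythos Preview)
Your approach is essentially the paper's: parts \eqref{gen: mat}--\eqref{gen: ss} via the same block computation and the general facts about $\g \ltimes V$ that the paper isolates as Propositions \ref{prop: sd} and \ref{prop: block}, and parts \eqref{gen: der}--\eqref{gen: der2} by iterating the identity $(\g \ltimes V)^{(1)} = \g^{(1)} \ltimes \g V$ and checking that the $V$-component stays full under the stated hypotheses.

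One slip in your treatment of \eqref{gen: der}: the displayed identification $(\gl_{m} \oplus \moN) \cdot \HomV = \mathrm{Hom}(\bV_{\moN}, \s_m \cdot \bV_{\gl_m})$ is not correct (the action on $\HomV$ is two-sided, and there is no reason $\s_m$ rather than $\gl_m$ should appear when you are computing the span under $\gl_m \oplus \moN$), and the supporting claim ``$\s_m k^m = k^m$ for all $m \ge 1$'' fails when $m = 1$ since $\s_1 = 0$. The paper's fix is cleaner and works uniformly: one simply observes that $(\bI_m \oplus 0)\cdot \phi = \phi$ for every $\phi \in \HomV$, so $(\gl_m \oplus \moN)\,\HomV = \HomV$ for all $m \ge 1$ without any appeal to $\s_m$. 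With this correction your argument for \eqref{gen: der}, and then the induction for \eqref{gen: der2} (where the paper likewise shows directly that $(\gl_m^{(i-1)} \oplus \moN^{(i-1)})\,\HomV = \HomV$ under each of (i), (ii), (iii), using in case (iii) that $\bV_{\moN}^{*} \cong \bV_{\moN}$ since $N$ is nondegenerate), go through exactly as in the paper.
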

% >>>
The following Proposition describes the relation between $\moM$ and $\omoM$ for general $M$.
% prop: 0M vs 0barM <<<
\begin{mprop}
\label{prop: barses}
Assume that $M\ne 0$.
The following statements hold.
\begin{enumerate}
\item If $\mathrm{char}\,k\ne 2$ then $\omoM = \moM \oplus k\bI_{n}$ and $\omoM^{\left(1\right)} = \moM^{\left(1\right)}$.
\item \label{barses2} If $\mathrm{char}\,k=2$ then $\moM$ is either equal to $\omoM$ or there is a short exact sequence of Lie algebras
\begin{equation}
\label{eqn: ses}
0 \to \moM \to \omoM \to k \to 0
\end{equation}
where $\omoM \to k$ maps $X$ to the unique $\lambda \in k$ satisfying $X^tM + MX = \lambda M$.
In the latter case $\omoM$ is isomorphic to a semidirect product $\moM \rtimes \gl_1$ and $\moM$ is a codimension one ideal containing $\omoM^{\left(1\right)}$.  
\end{enumerate}
\end{mprop}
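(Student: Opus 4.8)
My plan is to reduce the whole proposition to the study of a single linear functional. For $M\neq0$ I would define $\phi\colon\omoM\to k$ by letting $\phi(X)$ be the unique scalar with $X^{t}M+MX=\phi(X)\,M$ — this is precisely the map $\omoM\to k$ named in the statement, it is well defined because $M\neq0$, it is evidently $k$-linear, and by definition $\ker\phi=\moM$. The first thing I would check is that $\phi$ is a Lie algebra homomorphism, where $k$ carries the zero bracket: because $X\cdot M=X^{t}M+MX$ is an \emph{action} of $\gl_{d}$ on bilinear forms, we have $[X,Y]\cdot M=X\cdot(Y\cdot M)-Y\cdot(X\cdot M)$, and inserting $X\cdot M=\phi(X)M$ and $Y\cdot M=\phi(Y)M$ gives $[X,Y]\cdot M=\phi(Y)\phi(X)M-\phi(X)\phi(Y)M=0$, i.e.\ $\phi([X,Y])=0$. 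This already yields, in every characteristic, that $\moM=\ker\phi$ is an ideal of $\omoM$ and that $\omoM^{(1)}\subseteq\moM$.

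For part (a) I would use that $\bI_{d}^{t}M+M\bI_{d}=2M$, so when $\mathrm{char}\,k\neq2$ the identity $\bI_{d}$ lies in $\omoM$ with $\phi(\bI_{d})=2\neq0$; hence $\phi$ is surjective and $\omoM=\moM\oplus k\bI_{d}$ as vector spaces. Since $\bI_{d}$ is central in $\gl_{d}$, this is a direct sum of Lie algebras, and therefore $\omoM^{(1)}=[\,\moM\oplus k\bI_{d},\,\moM\oplus k\bI_{d}\,]=[\moM,\moM]=\moM^{(1)}$, proving (a).

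For part (b), when $\mathrm{char}\,k=2$ we have $\phi(\bI_{d})=2=0$, and I would split into the two cases according to whether $\phi$ vanishes. If $\phi=0$ then $\omoM=\ker\phi=\moM$. If $\phi\neq0$ then, its codomain being one-dimensional, $\phi$ is surjective and we get the short exact sequence $0\to\moM\to\omoM\to k\to0$ with last map $\phi$. To exhibit a splitting over $\gl_{1}$ I would simply pick any $Y\in\omoM$ with $\phi(Y)=1$: the line $kY$ is a subalgebra (every one-dimensional subspace of a Lie algebra is one, being abelian, and as such is isomorphic to $\gl_{1}$) and is a vector-space complement to the ideal $\moM$, so $\omoM\cong\moM\rtimes kY\cong\moM\rtimes\gl_{1}$ with $\gl_{1}$ acting through $[Y,-]$. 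The remaining assertions of (b) — that $\moM$ is a codimension-one ideal containing $\omoM^{(1)}$ — are immediate from the first paragraph.

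Honestly there is no serious obstacle here: once $\phi$ is identified as a Lie algebra homomorphism with kernel $\moM$ and image of dimension at most one, both parts are just bookkeeping about a codimension-$\le1$ ideal. The two points I would take care to state explicitly are the well-definedness of $\phi$ (the only place the hypothesis $M\neq0$ is used) and the elementary remark that a line through the origin in a Lie algebra is automatically a subalgebra, which is what makes the vector-space splitting in (b) automatically a splitting of Lie algebras.
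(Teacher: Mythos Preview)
Your proposal is correct and follows essentially the same route as the paper: define the linear map $\phi:\omoM\to k$, show $\phi([X,Y])=0$ so that $\moM=\ker\phi$ is an ideal containing $\omoM^{(1)}$, then split cases according to $\mathrm{char}\,k$ and the (non)vanishing of $\phi$, using $\bI_{d}$ as the explicit splitting in characteristic $\ne 2$ and an arbitrary preimage of $1$ in characteristic $2$. The only cosmetic difference is that you deduce $\phi([X,Y])=0$ from the action identity $[X,Y]\cdot M=\pm\bigl(X\cdot(Y\cdot M)-Y\cdot(X\cdot M)\bigr)$, whereas the paper expands $[X,Y]^{t}M+M[X,Y]$ by hand and regroups terms; these are the same computation.
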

% >>>
% remark <<<
\begin{remi}
\label{rem: bothcases}
Suppose that $\mathrm{char}\, k = 2$.
We have borrowed the notation $\omoM$ from the proof of \cite{KrutovL18}.
The nondegenerate cases $M=\mathbb{I}_{n}$ and $M=\Pi_{2n}$ of Proposition \ref{prop: barses}\eqref{barses2} are stated in the proof of \cite[Thm.\ 5.1]{KrutovL18}, where $\omoM$ is related to the Lie algebra of derivations of a simple quotient of $\moM^{\left(2\right)}$.
It is noted in loc.\ cit.\ that both of the possibilities in Proposition \ref{prop: barses}\eqref{barses2} occur in the nondegenerate symmetric case: $\moI = \overline{\mo}(I_n)$ whereas $\overline{\mo}\left(\Pi_{2n}\right) = \spl_{2n} \rtimes k\left(0_{n}\oplus\bI_n\right)$ is a non-trivial semidirect product. 

We also note that a semidirect product similar to the right hand side of Theorem \ref{thm: gen}\eqref{gen: isom} appears in \cite[\S 3.2]{Lebedev06} in the description of Lie algebras associated to certain equivalence classes of bilinear forms.
\end{remi}
% >>>
% remark on bilinear forms <<<
\begin{remi}
\label{rem: bil}
Our results can be applied to bilinear forms as follows.
Suppose that $B:V\times V \to k$ is a bilinear form on a finite dimensional $k$-vector space $V$.
One can define Lie subalgebras $\mo\left(B\right)$ and $\overline{\mo}\left(B\right)$ of $\gl\left(V\right)$ in a natural way.
If one fixes an ordered basis of $V$ then the associated isomorphism $\gl\left(V\right) \cong \gl_{\mathrm{dim}\;V}$ restricts to isomorphisms $\mo\left(B\right) \cong \moM$ and $\overline{\mo}\left(B\right) \cong \omoM$ where $M$ is the Gram matrix of $B$.
The condition that $M$ is congruent to a matrix of the form $0_m \oplus N$ with $N$ nondegenerate is equivalent to the condition that the bilinear form $B_{V/V^{\perp}}$ induced on $V/V^{\perp}$ is nondegenerate, where
\[
	V^{\perp} = \left\{ x \in V \; | \; B\left(x,y\right) = B\left(y,x\right) = 0 \; \text{for all} \; y \in V \right\}.
\]
In this case the basis independent analogue of Theorem \ref{thm: gen}\eqref{gen: isom} is that 
\[
\mo\left(B\right) \cong \left(\gl\left(V^{\perp}\right) \oplus \mo\left(B_{V/V^{\perp}}\right) \right) \ltimes \mathrm{Hom}\left(V/V^{\perp},V^{\perp}\right). 
\] 
We note that $\mathrm{Hom}\left(V/V^{\perp},V^{\perp}\right)$ appears in the proof of \cite[Lem. 3.1]{AhmadiIS14} where it is denoted $I_{\mathrm{Rad}\left(V\right)}$.
\end{remi}
% >>>
% end subsection >>>
% calculations <<<
\subsection{Further calculations}
For $M$ a singular symmetric or antisymmetric matrix we calculate the radical, semisimple quotient and derived series of the Lie algebra $\moM$.
These statements and their proofs are contained in \S \ref{sec: struc}.
% >>>
% graded algebras <<<
\subsection{Derivations of graded algebras}
\label{sec: mainder}
Suppose that $A$ is a finite dimensional unital graded $k$-algebra with Hilbert polynomial $1 + dt + t^2$ so that
\[
A = k1_{A} \oplus A_{\left(1\right)} \oplus A_{\left(2\right)}
\] 
with $\mathrm{dim}\;A_{\left(1\right)} = d$ and $\mathrm{dim}\;A_{\left(2\right)} = 1$. 
Assume that $A$ is generated by $A_{\left(1\right)}$.
It follows from the grading that the multiplication in $A$ is equivalent to a non-zero $A_{\left(2\right)}$-valued bilinear form on $A_{\left(1\right)}$.
More concretely, if we fix an ordered basis $e_1,\dots,e_d$ of $A_{\left(1\right)}$ and a basis $e$ of $A_{\left(2\right)}$ then the multiplication in $A$ is determined by the $d\times d$ matrix $M$ defined by $e_i e_j = M_{ij}e$.
Changing the basis of $A_{\left(1\right)}$ resp.\ $A_{\left(2\right)}$ corresponds to replacing $M$ by a congruent matrix resp.\ rescaling $M$ by a non-zero element of $k$.
Therefore, if every element in $k$ is a square (e.g.\ if $k$ is algebraically closed) then isomorphism classes of algebras of this type correspond to congruence classes of non-zero square matrices of size $d$.
Note that $A$ is commutative resp.\ graded commutative if and only if $M$ is symmetric resp.\ antisymmetric.

The commutative algebras of this type and their description in terms of symmetric matrices appear in the classification of low dimensional commutative algebras in work of Poonen \cite{Poonen08}.
These algebras coincide with the local $k$-algebras with residue field $k$ and maximal ideal $\m$ satisfying $\mathrm{dim}_{k}\; \m/\m^{2} = d$, $\mathrm{dim}_{k} \; \m^{2} = 1$ and $\m^{3}=0$, which are the algebras of type $\vec{d}=(d,1)$ in the notation of loc.\ cit.

Let $A$ be an algebra of the above type.
The Lie algebra of derivations of $A$
	\[
	\g := \left\{\p \in \gl(A) \; | \; \p(ab) = \p(a)b+a\p(b)\; \text{for all} \; a,b \in A\right\}
	\]
inherits a grading from $A$ defined by
	\[
	\g_{(i)} := \left\{\p \in \g \; | \; \p(A_{(j)}) \subseteq A_{(j+i)} \; \text{for all} \; 0\le j \le 2 \right\}
	\]
and satisfying $\left[\g_{\left(i\right)},\g_{\left(j\right)}\right] \subseteq \g_{\left(i+j\right)}$ for any $i,j \in \bZ$. 
As a derivation $\p \in \g$ is determined by its restriction to the generating set $A_{\left(1\right)}$ the subspace $\g_{\left(i\right)} \subseteq \g$ can be identified with a subspace of $\mathrm{Hom}\left(A_{\left(1\right)},A_{\left(i\right)}\right)$.
In particular, $\g_{\left(i\right)}=0$ for $|i|\ge 2$ and 
	\[
	\g = \g_{\left(-1\right)} \oplus \g_{\left(0\right)} \oplus \g_{\left(1\right)}.
	\]
The natural representation of $\g$ on $A$ restricts to a representation of $\g_{\left(0\right)}$ on each $A_{\left(i\right)}$ which we denote by $\pi:\g_{\left(0\right)} \to \gl\left(A_{\left(2\right)}\right)$ in the case $i=2$.
The Lie algebra $\g_{\left(0\right)}$ acts on each $\g_{\left(i\right)}$ via the restriction of the adjoint representation.
With respect to the identification above $\g_{\left(i\right)}$ is naturally identified as a $\g_{(0)}$ representation with a subrepresentation of $\mathrm{Hom}\left(A_{\left(1\right)},A_{\left(i\right)}\right)$.
See Remark \ref{rem: Witt} below for the definition of the Witt algebras $W(n)$ appearing in the following result.
% main thm: derivations <<<
\begin{mthm}
\label{thm: der}
Let $M$ be the matrix corresponding to some choice of bases of $A_{\left(1\right)}$ and $A_{\left(2\right)}$. 
\begin{enumerate}
\item \label{der: g0} $\g_{\left(0\right)} \cong \omoM$.
\item $\mathrm{Ker} \, \pi \cong \moM$. 
\item \label{der: g1ab} The Lie algebra $\g_{\left(1\right)}$ is abelian.
\item \label{der: g0g1} $\g_{\left(1\right)} \cong \mathrm{Hom}\left(A_{\left(1\right)},A_{\left(2\right)}\right)$.
\end{enumerate}
For the remaining statements assume that $A$ is either commutative or graded commutative. 
\begin{enumerate}
\setcounter{enumi}{4}
\item \label{der: sl2} If $\mathrm{char}\,k=3$ and $d=1$ then $\g_{\left(-1\right)} \cong \mathrm{Hom}\left(A_{\left(1\right)},A_{\left(0\right)}\right)$, $A \cong k\left[t\right]/\left(t^{3}\right)$ and $\g$ is isomorphic to the Witt algebra $W(1)\cong\s_{2}$. 
\item If $\mathrm{char}\,k=2$, $d=2$ and $M$ is congruent to $\Pi_{2}$ then $\g_{\left(-1\right)} \cong \mathrm{Hom}\left(A_{\left(1\right)},A_{\left(0\right)}\right)$, $A\cong k\left[t_{1},t_{2}\right]/\left(t_{1}^{2},t_{2}^{2}\right)$ and $\g$ is isomorphic to the Witt algebra $W(2)$.
\item \label{der: other} In all other cases the following statements hold:
\begin{enumerate}[label=(\roman*)]
	\item \label{other: g(-1)} $\g_{\left(-1\right)} = 0$,
	\item \label{other: isom} $\g \cong \g_{\left(0\right)} \ltimes \mathrm{Hom}\left(A_{\left(1\right)},A_{\left(2\right)}\right)$,
\
	\item \label{other: rad} $\rad\left(\g\right) \cong \rad\left(\omoM\right) \ltimes \mathrm{Hom}\left(A_{\left(1\right)},A_{\left(2\right)}\right)$,
	\item \label{other: ss} $\g^{\ssimp} \cong \omoM^{\ssimp}$.
	\item \label{other: iterated} $\g$ is isomorphic to an iterated semi-direct product, and in particular:
	\[
	\g \cong 
	\begin{cases}
	( (\gl_{m} \oplus \moI) \ltimes k^{m \times n}) \ltimes k^{d} & \text{ if } \; M \sim 0_{m} \oplus \bI_{n} \; \text{ and }  \; \mathrm{char} \, k = 2, \\
	( (\gl_{m} \oplus \so_{n} \oplus \gl_{1}) \ltimes k^{m \times n}) \ltimes k^{d} & \text{ if } \; M \sim 0_{m} \oplus \bI_{n} \; \text{ and }  \; \mathrm{char} \, k \ne 2, \\ 
	( (\gl_{m} \oplus \overline{\mo}(\Pi_{2n}) ) \ltimes k^{m \times 2n}) \ltimes k^{d} & \text{ if } \; M \sim 0_{m} \oplus \Pi_{2n} \; \text{ and }  \; \mathrm{char} \, k = 2,  \\
	( (\gl_{m} \oplus \spl_{2n} \oplus \gl_{1}) \ltimes k^{m \times 2n}) \ltimes k^{d} & \text{ if } \; M \sim 0_{m} \oplus \Pi_{2n} \; \text{ and }  \; \mathrm{char} \, k \ne 2,
	\end{cases}
	\]
	where `$\sim$' denotes the relation of congruence.
\end{enumerate}

\end{enumerate}
\end{mthm}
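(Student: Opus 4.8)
The plan is to compute $\g=\mathrm{Der}(A)$ degree by degree. Write $A=T/I$ with $T=T(A_{(1)})$ the tensor algebra and $I$ the kernel of the projection $T\to A$; since $A_{(0)}=k1_{A}$ and $A_{(i)}=0$ for $i\ge 3$ we have $I_{1}=0$, $I_{2}=\ker\mu$ for $\mu\colon A_{(1)}^{\otimes 2}\to A_{(2)}$ the multiplication, and $I_{j}=T_{j}$ for $j\ge 3$. A derivation of $A$ is exactly a derivation of $T$ preserving $I$, hence is freely determined by a linear map on $A_{(1)}$ whose canonical extension sends $I$ into $I$; and as any derivation of $T$ preserves $T_{\ge 3}\subseteq I$, only the degree-$2$ and degree-$3$ parts of $I$ impose constraints. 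Separating homogeneous pieces: $\g_{(2)}=0$ for lack of room, so $\g=\g_{(-1)}\oplus\g_{(0)}\oplus\g_{(1)}$ and $[\g_{(1)},\g_{(1)}]\subseteq\g_{(2)}=0$. A degree-$0$ derivation acts on $A_{(1)}$ by a matrix $X$ and on the line $A_{(2)}$ by a scalar $\lambda$, and preserving $\ker\mu$ unwinds through $e_{i}e_{j}=M_{ij}e$ to $X^{t}M+MX=\lambda M$, i.e.\ $X\in\omoM$ with $\lambda$ the scalar attached to $X$ in Proposition~\ref{prop: barses}; this is \eqref{der: g0}, and since $\pi$ corresponds to $X\mapsto\lambda$ we also get $\mathrm{Ker}\,\pi\cong\moM$. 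A degree-$1$ derivation is a map $A_{(1)}\to A_{(2)}$ extended by zero on $A_{(0)}$, and it sends $T_{2}$ into $T_{3}\subseteq I$, so is unconstrained; hence $\g_{(1)}=\mathrm{Hom}(A_{(1)},A_{(2)})$ as $\g_{(0)}$-modules, giving \eqref{der: g1ab} and \eqref{der: g0g1}.

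The real content is $\g_{(-1)}$. A degree-$(-1)$ derivation is $e_{a}\mapsto c_{a}1_{A}$ extended by zero on $A_{(0)}$, and it preserves $I$ if and only if (a) it kills $\ker\mu$, equivalently there is $v\in A_{(1)}$ with $M_{ab}v=c_{a}e_{b}+c_{b}e_{a}$ for all $a,b$ (in which case $v$ is its value on $A_{(2)}$), and (b) it maps $T_{3}$ into $\ker\mu$, equivalently $c_{a}M_{bc}+c_{b}M_{ac}+c_{c}M_{ab}=0$ for all $a,b,c$. Assume now that $A$ is commutative or graded commutative, and choose bases so that $M=0_{m}\oplus D$ with $D$ nonsingular diagonal, or $M=0_{m}\oplus\Pi_{2n}$; since $A$ is generated in degree $1$, $M\ne 0$ and $n\ge 1$. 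Then: if $M$ is symmetric and $d\ge 2$, (a) applied to the pairs $(f_{i},f_{i})$ and to distinct $M$-orthogonal basis vectors forces $v=0$ and all $c_{a}=0$; if $M$ is antisymmetric and $\mathrm{char}\,k\ne 2$, (a) on the pairs $(a,a)$ forces all $c_{a}=0$ and then a hyperbolic pair forces $v=0$; if $M$ is antisymmetric and $\mathrm{char}\,k=2$, (a) on distinct $M$-orthogonal basis vectors forces all $c_{a}=0$, hence $v=0$, unless $m=0$ and $n=1$; and if $M$ is symmetric with $d=1$, rescaling gives $A\cong k[t]/(t^{3})$, (a) gives $v=2c_{1}t$, and (b) gives $3c_{1}=0$. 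Consequently $\g_{(-1)}=0$ except when $(\mathrm{char}\,k,d)=(3,1)$ or when $\mathrm{char}\,k=2$, $d=2$ and $M\sim\Pi_{2}$; in those two cases (b) is vacuous, $\g_{(-1)}\cong\mathrm{Hom}(A_{(1)},A_{(0)})$, and $A$ is forced to be $k[t]/(t^{3})$ respectively $k[t_{1},t_{2}]/(t_{1}^{2},t_{2}^{2})$, so $\g=\mathrm{Der}(A)$ is the Witt algebra $W(1)$ (which for $\mathrm{char}\,k=3$ one checks on the basis $\p_{t},t\p_{t},t^{2}\p_{t}$ to be $\s_{2}$), respectively $W(2)$.

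This gives \eqref{der: sl2} and the statement after it. In every remaining case $\g_{(-1)}=0$, so $\g=\g_{(0)}\ltimes\g_{(1)}$ with $\g_{(1)}$ an abelian ideal, which is \ref{other: g(-1)} and \ref{other: isom} via $\g_{(0)}\cong\omoM$ and $\g_{(1)}\cong\mathrm{Hom}(A_{(1)},A_{(2)})$. Since $\g_{(1)}$ is solvable it lies in $\rad(\g)$, so $\rad(\g)$ is the preimage of $\rad(\g_{(0)})=\rad(\omoM)$ under $\g\to\g_{(0)}$, and the splitting gives $\rad(\g)\cong\rad(\omoM)\ltimes\mathrm{Hom}(A_{(1)},A_{(2)})$ and $\g^{\ssimp}\cong\omoM^{\ssimp}$, which are \ref{other: rad} and \ref{other: ss}. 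Finally \ref{other: iterated} follows by substituting into $\g\cong\omoM\ltimes\mathrm{Hom}(A_{(1)},A_{(2)})$ the description of $\omoM$ from Proposition~\ref{prop: barses} and Theorems~\ref{thm: gen} and \ref{thm: table}: $\moM$ is the block upper triangular Lie algebra with diagonal blocks $\gl_{m}$ and $\moN$, i.e.\ $(\gl_{m}\oplus\moN)\ltimes k^{m\times n}$; adjoining the scalar $k\bI_{d}$ (which acts trivially on the off-diagonal block) when $\mathrm{char}\,k\ne 2$, or the line $k(0_{m+n}\oplus\bI_{n})$ when $\mathrm{char}\,k=2$ in the antisymmetric case, produces $\omoM$; and $\mathrm{Hom}(A_{(1)},A_{(2)})\cong k^{d}$. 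Specialising $N$ to $\bI_{n}$ (so $\moN=\moI$, which is $\so_{n}$ when $\mathrm{char}\,k\ne 2$) or to $\Pi_{2n}$ (so $\moN=\spl_{2n}$) yields the four displayed formulas.

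The main obstacle is the degree-$(-1)$ analysis. It is easy to miss that the degree-$3$ relations of $A$ are precisely what impose condition (b), and one must verify that the case analysis over the normal forms $0_{m}\oplus D$ and $0_{m}\oplus\Pi_{2n}$ is genuinely exhaustive --- in particular handling $\mathrm{char}\,k=2$ with the alternating versus non-alternating dichotomy, and isolating exactly the two exceptional algebras $k[t]/(t^{3})$ and $k[t_{1},t_{2}]/(t_{1}^{2},t_{2}^{2})$. A minor additional ingredient is the classical identification $W(1)\cong\s_{2}$ in characteristic $3$.
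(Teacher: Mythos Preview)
Your proof is correct and follows essentially the same route as the paper: compute $\g$ degree by degree, identify $\g_{(0)}\cong\omoM$ and $\g_{(1)}\cong\mathrm{Hom}(A_{(1)},A_{(2)})$ by unwinding Leibniz, and then carry out a case analysis on the normal form of $M$ to show $\g_{(-1)}=0$ outside the two Witt exceptions.

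The one genuine variation is your use of the presentation $A=T(A_{(1)})/I$, which recasts a degree $-1$ derivation as a derivation of $T$ preserving $I$ and yields the cubic condition $c_{a}M_{bc}+c_{b}M_{ac}+c_{c}M_{ab}=0$ from $D(T_{3})\subseteq I_{2}$. The paper instead works directly in $A$ and obtains the linear condition $c_{i}+\sum_{l}M_{il}v_{l}=0$ from Leibniz applied to the product $e_{i}\cdot e$. Given condition (a), these two constraints are equivalent (multiply the paper's relation by $M_{bc}$ and substitute $M_{bc}v_{l}=c_{b}\delta_{cl}+c_{c}\delta_{bl}$), and in practice both proofs use only condition (a) for $d\ge 2$ and invoke the second condition solely in the $d=1$ case to extract $3c_{1}=0$. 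Your tensor-algebra packaging is slightly cleaner conceptually, but the actual casework --- picking distinct $M$-orthogonal basis vectors to force $c=0$, and isolating the two small exceptions --- is identical to the paper's. One small imprecision: strictly speaking derivations of $A$ correspond to derivations of $T$ preserving $I$ \emph{modulo} those mapping $T$ into $I$; this quotient is trivial in degrees $\le 0$ but is what actually produces $\g_{(1)}\cong\mathrm{Hom}(A_{(1)},A_{(2)})$ rather than $\mathrm{Hom}(A_{(1)},T_{2})$ in degree $1$.
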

% >>>
% remark on Witt algebras <<<
\begin{remi}
\label{rem: Witt}
For a field $k$ of characteristic $p>0$ and $n\in \bN$ the \emph{Witt algebra} $W(n)$ is by definition the Lie algebra of derivations of the commutative algebra $k\left[t_{1},\dots,t_{n}\right]/\left(t_{1}^{p},\dots,t_{1}^{p}\right)$.
The Lie algebra $W(n)$ is simple unless $p=2$ and $n=1$.
Except for some exceptional isomorphisms for small $p$ and $n$ the Witt algebras are not isomorphic to any simple Lie algebra of classical type and appear in the classification of finite dimensional simple Lie algebra over algebraically closed fields of characteristic $p\ge 5$.
See \cite{PremetS06,StradeBookI} for further details.
\end{remi}
% >>>
% end subsection >>>
% relation to other works <<<
\subsection{Relation to other work}
The square matrices $M$ such that $\moM$ is simple, semisimple or reductive (in the sense that all solvable ideals are central, though see \cite{MOhumphreys17} for a discussion of this notion when $\mathrm{char}\,k \ne 0$) were determined by Ahmadi, Izadi, Chaktoura, and Szechtman \cite{AhmadiCS14, AhmadiIS14}.
% end subsection >>>
% structure of paper <<<
\subsection{Structure of the paper}
The proofs of Theorems \ref{thm: table} and \ref{thm: gen}, Proposition \ref{prop: barses} and Theorem \ref{thm: der} are contained the sections \ref{sec: proofthmgen} - \ref{sec: proofpropder}.
Note that Theorem \ref{thm: gen} is proved in \S \ref{sec: proofthmgen} before Theorem \ref{thm: table} in \S \ref{sec: proofthmtable}.
In \S \ref{sec: struc} we calculate the radicals, semisimple quotients and derived series of $\moM$ for $M$ symmetric or antisymmetric. 
In Appendix \ref{sec: bg} we collect a number of known facts about the structure of the Lie algebras $\gl_{n}$, $\s_{n}$, $\moD$, $\so_{n}$ and $\spl_{2n}$.
% >>>
% acknowledgements <<<
\subsection{Acknowledgements}
Some related computations of certain Lie algebras of derivations were done in forthcoming joint work with Joseph Dessi \cite{DessiW}.
% >>>

% end section >>>

% Proofs of main results <<<
\section{Proofs of main results}
%
% preliminary <<<
\subsection{Preliminary results}
The following Propositions collects several statements about semidirect product Lie algebras and certain subalgebras of $\gl_{d}$ that will be used in the proof of Theorem \ref{thm: gen}.
%
% prop <<<
\begin{prop}
\label{prop: sd}
Suppose that $V$ is a finite dimensional representation of a finite dimensional Lie algebra $\g$. 
\begin{enumerate}
\item \label{sd: rad} $\rad(\g \ltimes V) = \rad\left(\g\right) \ltimes V$. 
\item \label{sd: ss} $\left(\g\ltimes V\right)^{\ssimp} \cong \g^{\ssimp}$.
\item \label{sd: sol} $\g \ltimes V$ is solvable if and only if $\g$ is solvable.
\item \label{sd: der} $\left(\g \ltimes V\right)^{\left(1\right)} = \g^{\left(1\right)} \ltimes \g V$.
\item \label{sd: perf} If $\g$ is perfect and $\g V = V$ then $\g \ltimes \g V$ is perfect.
\end{enumerate}
\end{prop}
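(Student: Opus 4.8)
The plan is to treat the five parts in the order \eqref{sd: sol}, \eqref{sd: rad}, \eqref{sd: ss}, \eqref{sd: der}, \eqref{sd: perf}, since each follows quickly from the preceding ones together with two basic observations about $\g\ltimes V$: the bracket formula $[(x,v),(y,w)] = ([x,y],\,x\cdot w - y\cdot v)$ for $x,y\in\g$ and $v,w\in V$, and the fact that $V$ is an abelian (hence solvable) ideal of $\g\ltimes V$ with $(\g\ltimes V)/V \cong \g$. Throughout I would use that over an arbitrary field the radical of a finite dimensional Lie algebra is well defined, since a sum of two solvable ideals is solvable.

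For \eqref{sd: sol}: if $\g$ is solvable then $\g\ltimes V$ is an extension of the solvable Lie algebra $\g$ by the solvable ideal $V$, hence solvable; conversely, if $\g\ltimes V$ is solvable then so is its quotient $\g$. For \eqref{sd: rad}, write $\rad(\g)\ltimes V$ for the preimage of $\rad(\g)$ under the projection $\g\ltimes V\to\g$, which is $\rad(\g)\oplus V$ as a vector space with the induced semidirect-product bracket. The bracket formula and the fact that $\rad(\g)$ is an ideal of $\g$ show that $\rad(\g)\ltimes V$ is an ideal of $\g\ltimes V$, and it is solvable by \eqref{sd: sol}, so it is contained in $\rad(\g\ltimes V)$. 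Conversely the image of $\rad(\g\ltimes V)$ under $\g\ltimes V\to\g$ is a solvable ideal of $\g$, hence contained in $\rad(\g)$, so $\rad(\g\ltimes V)$ is contained in the preimage $\rad(\g)\ltimes V$; this gives equality. Part \eqref{sd: ss} is then immediate from the isomorphism theorem: $(\g\ltimes V)^{\ssimp} = (\g\ltimes V)/\rad(\g\ltimes V) = (\g\ltimes V)/(\rad(\g)\ltimes V) \cong \g/\rad(\g) = \g^{\ssimp}$.

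For \eqref{sd: der}, the bracket formula shows that $(\g\ltimes V)^{(1)}$ is spanned by the elements $([x,y],\,x\cdot w - y\cdot v)$; the first coordinates span $\g^{(1)}$, and taking $x=y$, $w=0$ shows $(0,x\cdot v)\in(\g\ltimes V)^{(1)}$ for all $x\in\g$, $v\in V$, so the second coordinates span $\g V$. Since $\g V$ is a $\g$-subrepresentation of $V$ (because $x\cdot(y\cdot v) = [x,y]\cdot v + y\cdot(x\cdot v)$), hence a $\g^{(1)}$-subrepresentation, the subspace $\g^{(1)}\oplus\g V$ with the bracket inherited from $\g\ltimes V$ is precisely the semidirect product $\g^{(1)}\ltimes\g V$. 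Part \eqref{sd: perf} then follows: if $\g$ is perfect and $\g V = V$ then $\g\ltimes\g V = \g\ltimes V$, and \eqref{sd: der} gives $(\g\ltimes V)^{(1)} = \g^{(1)}\ltimes\g V = \g\ltimes V$, so $\g\ltimes\g V$ is perfect.

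There is no real obstacle here; the only points that require a little care are checking that $\rad(\g)\ltimes V$ is an ideal rather than merely a subalgebra, that $\g V$ is genuinely a subrepresentation so that the semidirect-product identifications in \eqref{sd: der} and \eqref{sd: perf} make sense, and that the radical and semisimple quotient behave as expected over a field of positive characteristic, where $\g^{\ssimp}$ need not decompose as a sum of simple ideals.
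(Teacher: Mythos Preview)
Your proof is correct and follows essentially the same approach as the paper's: both arguments use the projection $\g\ltimes V\to\g$ and its kernel $V$ to identify the radical, and both compute the derived subalgebra directly from the bracket formula. The only difference is ordering: you prove \eqref{sd: sol} first by the extension argument and use it to deduce \eqref{sd: rad}, whereas the paper proves \eqref{sd: rad} directly and then derives \eqref{sd: sol} from \eqref{sd: ss}; your version is slightly more careful in explicitly verifying both inclusions for the radical and that $\g V$ is a subrepresentation.
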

% >>>
% proof <<<
\begin{proof}
\begin{enumerate}[wide, labelwidth=!, labelindent=0pt]
\item The ideal $\rad(\g) \ltimes \g V$ is solvable because it is an extension of a solvable Lie algebra by an abelian Lie algebra.
It is equal to the radical of $\g\ltimes V$ because its image under the quotient morphism $\g \ltimes V \to \left(\g \ltimes V\right) / V \cong \g$ is equal to $\rad\left(\g\right)$ which is by definition the maximal solvable ideal in $\g$.
\item By \eqref{sd: rad} we have $\left(\g \ltimes V\right)^{\ssimp} = \left(\g \ltimes V\right) / \left(\rad\left(\g\right) \ltimes V\right) \cong \g / \rad(\g) = \g^{\ssimp}$. 
\item This follows immediately from \eqref{sd: ss}.
\item It follows from the definition of the semidirect product and the fact that $V$ is abelian that 
	\begin{align*}
	\left(\g \ltimes V\right)^{\left(1\right)} 
	& = \mathrm{span} \, \left\{ \left[ \left(x,v\right) , \left(y,w\right) \right]  \; | \; x,y \in \g, \; v,w \in V \right\} \\
	& = \mathrm{span} \, \left\{ \left[ \left(x,0\right) , \left(y,0\right) \right] , \left[ \left(z,0\right) , \left(0,v\right) \right] \; | \; x,y,z \in \g, \; v \in V \right\} \\
	& = \mathrm{span} \, \left\{ \left(x,0\right),\left(0,z \cdot v\right) \; | \; x \in \g^{\left(1\right)},z \in \g, \; v \in V \right\} \\
	& = \g^{\left(1\right)} \ltimes \g V.
	\end{align*}
as required.
\item This follows immediately from \eqref{sd: der}. 
\qedhere
\end{enumerate}
\end{proof}
% >>>
% prop <<<
\begin{prop}
\label{prop: block}
Suppose that $\h\subseteq \gl_{m}$ and $\mk\subseteq \gl_{n}$ are Lie subalgebras. Let $\g \subseteq \gl_{m+n}$ be subspace
	\[
	\begin{pmatrix}
	\h & k^{m\times n} \\
	0 & \mk
	\end{pmatrix}.
	\]
\begin{enumerate}
\item \label{block: subalg} $\g$ is a Lie subalgebra of $\gl_{m+n}$.
\item \label{block: isom} $\g \cong \left(\h\oplus \mk\right) \ltimes \HomkV$.
\item \label{block: rad} $\rad\left(\g\right) \cong \left(\rad\left(\g\right) \oplus \rad\left(\mk\right)\right) \ltimes \HomkV$.
\item \label{block: ssimp} $\g^{\ssimp} \cong \h^{\ssimp} \oplus \mk^{\ssimp}$.
\end{enumerate}
\end{prop}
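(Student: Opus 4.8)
The plan is to realize $\g$ explicitly as a semidirect product and then apply Proposition \ref{prop: sd}. First I would establish \eqref{block: subalg} by a direct computation: given two block upper-triangular matrices
\[
X = \begin{pmatrix} A & B \\ 0 & C \end{pmatrix}, \qquad
X' = \begin{pmatrix} A' & B' \\ 0 & C' \end{pmatrix},
\]
with $A,A'\in\h$, $C,C'\in\mk$, $B,B'\in k^{m\times n}$, the bracket $[X,X']$ has diagonal blocks $[A,A']\in\h$ and $[C,C']\in\mk$ (since $\h,\mk$ are subalgebras) and upper-right block $AB' - B'C' - A'B + BC'$, which lies in $k^{m\times n}$ with no constraint. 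Hence $\g$ is closed under the bracket.

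Next, for \eqref{block: isom}, I would identify the abelian ideal: let $V$ be the subspace of matrices with $A=0$, $C=0$, i.e. the copy of $k^{m\times n}$ sitting in the upper-right block. The computation above shows $[V,V]=0$ and $[\g,V]\subseteq V$, so $V$ is an abelian ideal, and $\g/V\cong\h\oplus\mk$ via the diagonal blocks. The embedding $\h\oplus\mk\hookrightarrow\g$ sending $(A,C)$ to the block-diagonal matrix splits this sequence, so $\g\cong(\h\oplus\mk)\ltimes V$ as Lie algebras. It remains to identify $V$ with $\HomkV=\mathrm{Hom}(\bV_{\mk},\bV_{\h})$ as a representation of $\h\oplus\mk$: a matrix $B\in k^{m\times n}$ is exactly a linear map $k^n\to k^m$, and from the formula for the upper-right block of $[X,X']$ with $X$ block-diagonal one reads off that $(A,C)$ acts on $B$ by $B\mapsto AB - BC$, which is precisely the action $((x,y)\cdot\phi)(w)=x\cdot\phi(w)-\phi(y\cdot w)$ defining $\mathrm{Hom}(\bV_{\mk},\bV_{\h})$ in the notation section (with $\bV_{\mk}=k^n$, $\bV_{\h}=k^m$). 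This gives \eqref{block: isom}.

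Parts \eqref{block: rad} and \eqref{block: ssimp} then follow immediately by combining \eqref{block: isom} with Proposition \ref{prop: sd}\eqref{sd: rad} and \eqref{sd: ss} applied to $\g=(\h\oplus\mk)\ltimes V$: one gets $\rad(\g)\cong\rad(\h\oplus\mk)\ltimes V=(\rad(\h)\oplus\rad(\mk))\ltimes V$ and $\g^{\ssimp}\cong(\h\oplus\mk)^{\ssimp}=\h^{\ssimp}\oplus\mk^{\ssimp}$, using that the radical of a direct sum is the direct sum of the radicals. (Note the statement of \eqref{block: rad} as printed has a typo, writing $\rad(\g)$ where $\rad(\h)$ is meant; I would silently correct this.) There is no real obstacle here — the only mildly delicate point is getting the sign and variance conventions in the $\mathrm{Hom}$-action to match the paper's definition exactly, so the bulk of the care goes into the bracket computation in \eqref{block: isom}; everything else is formal.
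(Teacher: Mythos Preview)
Your proposal is correct and follows essentially the same route as the paper: compute the bracket of two block upper-triangular matrices to establish closure, identify the strictly upper-triangular block as an abelian ideal on which the block-diagonal subalgebra $\h\oplus\mk$ acts by $B\mapsto AB-BC$, recognise this as $\HomkV$, and then read off \eqref{block: rad} and \eqref{block: ssimp} from Proposition~\ref{prop: sd}. One small slip: in your bracket computation the upper-right block should be $AB'+BC'-A'B-B'C$, not $AB'-B'C'-A'B+BC'$ (the second term has a stray prime on $C$); this is harmless for the argument since you only need membership in $k^{m\times n}$ there and you re-derive the action correctly afterwards.
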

% >>>
% proof <<<
\begin{proof}
First note that if $A,A'\in \h$, $D,D'\in \mk$ and $B,B' \in k^{m\times n}$ then
	\begin{align} 
	\label{eqn: blockdiag}
	\left[
	\begin{pmatrix}
	A & 0 \\
	0 & D 
	\end{pmatrix}
	,
	\begin{pmatrix}
	A' & 0 \\
	0 & D'
	\end{pmatrix}
	\right]
	& =
	\begin{pmatrix}
	\left[A,A'\right] & 0 \\
	0 & \left[D,D'\right]
	\end{pmatrix}
\end{align}
and
	\begin{align}	
	\label{eqn: blockaction}
	\left[
	\begin{pmatrix}
	A & B \\
	0 & D 
	\end{pmatrix}
	,
	\begin{pmatrix}
	0 & B' \\
	0 & 0
	\end{pmatrix}
	\right]
	& =
	\begin{pmatrix}
	0 & AB' -B'D \\
	0 & 0
	\end{pmatrix}.
	\end{align}
\begin{enumerate}[wide, labelwidth=!, labelindent=10pt]
\item Equations \eqref{eqn: blockdiag} and \eqref{eqn: blockaction} show that $\g$ is a subalgebra of $\gl_{m+n}$.
\item Equation \eqref{eqn: blockdiag} shows that the elements of $\g$ of the form
	\[
	\begin{pmatrix}
	A & 0 \\
	0 & D 
	\end{pmatrix}
	\]
constitute a subalgebra identified with $\h \oplus \mk$, and \eqref{eqn: blockaction} shows that those of the form
	\[
	\begin{pmatrix}
	0 & B' \\
	0 & 0
	\end{pmatrix}
	\]
constitute an abelian ideal identified with $k^{m\times n}$.
It follows that there is a canonical isomorphism $\g \cong \left(\h\oplus\mk\right) \ltimes k^{m\times n}$ where $A\oplus D$ acts on $B \in k^{m\times n}$ by $\left(A\oplus D\right) \cdot B = AB - BD$.
Under the canonical linear isomorphism $k^{m\times n} \cong \mathrm{Hom}\left(k^{n},k^{m}\right) = \HomkV$ this is exactly the action of $\h\oplus\mk$ on $\HomkV$, see \S \ref{sec: set-up}.
\item This follows from statement \eqref{block: isom}, Proposition \ref{prop: sd}\eqref{sd: rad}, and the fact that $\rad\left(\h\oplus \mk\right) = \rad\left(\h\right) \oplus \rad\left(\mk\right)$. 
\item This follows from statement \eqref{block: isom}, Proposition \ref{prop: sd}\eqref{sd: ss}, and the fact that $\left(\h\oplus\mk\right)^{\ssimp} \cong \h^{\ssimp}\oplus\mk^{\ssimp}$. 
\qedhere
\end{enumerate}
\end{proof}
% >>>
% end subsection >>>
% proof of thm gen <<<
\subsection{Proof of Theorem \ref{thm: gen}}
\label{sec: proofthmgen}
%
% proof <<<
\begin{proof}
\begin{enumerate}[wide, labelwidth=!, labelindent=10pt]
\item Suppose that
	\begin{equation*}
	X = \begin{pmatrix}
	A & B \\
	C & D 
	\end{pmatrix}
	\end{equation*}
where $A,B,C$ and $D$ are matrices of sizes $m\times m$, $m\times n$, $n\times m$ and $n\times n$ respectively.
The equation $X^{t}M+MX=0$ is
	\[
	\begin{pmatrix}
	A & B \\
	C & D 
	\end{pmatrix}^{t}
	\begin{pmatrix}
	0 & 0 \\
	0 & N 
	\end{pmatrix}
	+
	\begin{pmatrix}
	0 & 0 \\
	0 & N 
	\end{pmatrix}
	\begin{pmatrix}
	A & B \\
	C & D 
	\end{pmatrix}
	=
	\begin{pmatrix}
	0 & 0 \\
	0 & 0
	\end{pmatrix}
	\]
or
	\[
	\begin{pmatrix}
	0 & C N \\
	NC & D^{t}N+ND 
	\end{pmatrix}
	= 
	\begin{pmatrix}
	0 & 0  \\
	0 & 0 
	\end{pmatrix}
	\]
which is equivalent to $D^{t}N+ND=0$ and $C=0$ because $N$ is nonsingular.
Therefore $X \in \moM$ if and only if $C=0$ and $D \in \moN$. 
\item This follows from (a) and Proposition \ref{prop: block}\eqref{block: isom}.
\item This follows from (a) and Proposition \ref{prop: block}\eqref{block: rad}.
\item This follows from (a) and Proposition \ref{prop: block}\eqref{block: ssimp}.
\item  We may assume that $m\ge 1$ because if $m=0$ then $\moM=\moN$ and there is nothing to prove.
Then $\gl_{m}^{\left(1\right)} = \s_{m}$ and
	\[ 
	\left(\gl_{m} \oplus \moN \right) \HomV = \HomV
	\] 
because if $\phi \in \HomV$ then $\phi = \left(\bI_{m}\oplus 0\right) \cdot \phi$.
The result then follows from Proposition \ref{prop: sd}\eqref{sd: der}. 
\item  We may assume that $m\ge 1$ because if $m=0$ then $\moM=\moN$ and there is nothing to prove.
It is sufficient to show that in each of the three cases (i),(ii) and (iii) that 
	\begin{equation}
	\label{eqn: derhom} 
	\left(\gl_{m}^{\left(i-1\right)} \oplus \moN^{\left(i-1\right)} \right) \HomV = \HomV
	\end{equation}
because then $\gl_{m}^{\left(j-1\right)} \oplus \moN^{\left(j-1\right)} \subseteq \gl_{m}^{\left(i-1\right)} \oplus \moN^{\left(i-1\right)}$ for all $1\le j \le i$, which implies that 
	\begin{equation*}
	\left(\gl_{m}^{\left(j-1\right)} \oplus \moN^{\left(j-1\right)} \right) \HomV = \HomV
	\end{equation*}
and the result follows by induction and Proposition \ref{prop: sd}\eqref{sd: der}.

(i)\&(ii).
In either of these cases $\s_{m}$ is perfect and so $\gl_{m}^{\left(i-1\right)} = \s_{m}^{\left(i-2\right)} = \s_{m}$.
It is easy to see that $\s_{m}\bV_{\gl_{n}} = \bV_{\gl_{n}}$ (in fact $\bV_{\gl_{n}}$ is simple).
As an $\s_{m}$ representation $\HomV$ is isomorphic to the direct sum of $n$ copies of $\bV_{\gl_{n}}$ and therefore $\s_{m} \, \HomV = \HomV$ and \eqref{eqn: derhom} holds.

(iii).
By definition $\moN$ preserves a nondegenerate bilinear form on $k^n$ and therefore $\bV_{\moN}^{*} \cong \bV_{\moN}$.
By assumption $\moN^{\left(i-1\right)} \bV_{\moN} = \bV_{\moN}$ and therefore $\moN^{\left(i-1\right)} \bV_{\moN}^{*} = \bV_{\moN}^{*}$ also.
As an $\moN^{\left(i-1\right)}$ representation $\HomV$ is isomorphic to the direct sum of $m$ copies of $\bV_{\moN}^{*}$ and therefore $\moN^{\left(i-1\right)} \HomV = \HomV$ and \eqref{eqn: derhom} holds.
\qedhere
\end{enumerate}
\end{proof}
% >>>
% rem <<<
\begin{remi}
For the remainder of the paper we will use the isomorphisms established in the proof of Theorem \ref{thm: gen}\eqref{gen: isom} without comment.
In particular, in the proofs of Propositions \ref{prop: OI2}-\ref{prop: OP} we prove the statements about radicals, semisimple quotients and derived series by proving the corresponding statements about $\left(\gl_m \oplus \moN\right) \ltimes \HomV$.
\end{remi}
% >>>
% end subsection >>>
% proof of prop comparing 0M and bar0M <<<
\subsection{Proof of Proposition \ref{prop: barses}}
%
% proof <<<
\begin{proof}
We first establish some facts for general $k$ which will prove (b), and then specialize to the case $\mathrm{char}\,k \ne 2$ to prove (a).

If $X \in \omoM$ then the scalar $\lambda \in k$ satisfying $X^{t}M+MX=\lambda M$ is uniquely determined by and depends linearly on $X$ because $k$ is a field and $M \ne 0$.
It follows that there is a well-defined linear map $\omoM \to k$ mapping $X \in \omoM$ to the corresponding $\lambda \in k$.
The kernel of this map is $\moN$.
Considering $k$ as an abelian Lie algebra, this map is a Lie algebra homomorphism if and only if its kernel $\moN$ contains $\moN^{\left(1\right)}$.
If $X^{t}M+MX=\lambda M$ and $Y^{t}M+MY=\mu M$ then
	\begin{align*}
	\left[X,Y\right]^{t}M + M\left[X,Y\right] & =
	Y^{t}X^{t}M - X^{t}Y^{t}M + MXY - MYX \\
	& = \left(Y^{t}X^{t}M + Y^{t}MX\right) + \left(X^{t}MY + MXY\right)  \\
	& \;\;\;\;\; - \left(Y^{t}MX + MYX\right) -\left(X^{t}Y^{t}M + X^{t}MY\right)   \\
	& = \lambda Y^{t}M + \lambda MY  -\mu MX -\mu X^{t}M   \\
	& = \lambda \mu M - \mu \lambda M \\
	& = 0
	\end{align*}
and therefore $\left[X,Y\right] \in \moM$ as required.
It follows that either $\moM = \omoM$, or the morphism $\omoM \to k$ is surjective, there is a short exact sequence 
	\[
	0 \to \moM \to \omoM \to k \to 0,
	\]
and $\moM^{\left(1\right)} \subseteq \moM$.
In the second case $\omoM = \moM \rtimes kX \cong \omoM \rtimes \gl_{1}$ where $X$ is any element of $\omoM \backslash \moM$.
This proves (b). 

(a) If $\mathrm{char}\,k\ne 2$ then $\bI_{n}M + M\bI_{n} = 2M$ and therefore $\bI_{n} \in \omoM \backslash \moM$.
As $\bI_{n}$ is central in $\gl_{n}$ we have $\omoM = \moM \rtimes k\bI_{n} = \moM \oplus k\bI_{n}$.
\end{proof}
% >>>
%
% end subsection >>>
% proof of thm table <<<
\subsection{Proof of Theorem \ref{thm: table}}
\label{sec: proofthmtable}
%
% proof <<<
\begin{proof}
The entries in the table in Theorem \ref{thm: table} all follow from Theorem \ref{thm: gen}\eqref{gen: mat}\&\eqref{gen: isom} and Proposition \ref{prop: barses}.
The only part to explain is which of the two cases in Proposition \ref{prop: barses}\eqref{barses2} arise for $\mathrm{char} \, k = 2$ and either $d=m+n$ with $M=0_{m}\oplus D$ where $D \in k^{n\times n}$ is a nonsingular diagonal matrix, or $d=m+2n$ with $M=0_{m}\oplus \Pi_{2n}$. 

First suppose that $M=0_{m}\oplus D$.
If $X \in \gl_n$ satisfies $X^tM+XM=\lambda M$ then in particular
	\begin{align*}
	\left(X^{t}M+MX\right)_{m+1,m+1} & = \lambda M_{m+1,m+1} \\
	\Leftrightarrow \;\; 2X_{m+1,m+1}D_{1,1} & = \lambda D_{1,1}
	\end{align*}
and therefore $\lambda = 0$ because $\mathrm{char} \, k = 2$ and $D_{1,1} \ne 0$.
It follows that $\omoM = \moM$. 

Now suppose that $M=0_{m} \oplus \Pi_{2n}$.
It is straightforward to check that the block diagonal matrix $X=0_{m+n}\oplus \bI_{n}$ satisfies $X^{t}M+MX=M$ and therefore $\omoM = \mo\left(M\right) \rtimes kX$.
\end{proof}
% >>>
%
% end subsection >>>
% proof of theorem der <<<
\subsection{Proof of Theorem \ref{thm: der}}
\label{sec: proofpropder}
%
% proof <<<
\begin{proof}
Denote the chosen basis of $A_{\left(1\right)}$ by $e_1,\dots,e_d$ and that of $A_{\left(2\right)}$ by $e$.
It follows from the grading on $A$ that a linear map $\p:A\to A$ is a derivation if and only if
	\begin{align}
	\label{eqn: der}
	\p(e_i)e_j + e_i\p(e_j) & = \p(e_i e_j) \\
	\label{eqn: der2}
	\p(e_{i})e + e_{i}\p(e) & = 0
	\end{align}
for all $1\le i,j \le d$.
Moreover, any derivation is determined by its action on $A_{\left(1\right)}$, and annihilates $1_{A}$ and therefore acts trivially on $A_{\left(0\right)}$.

\begin{enumerate}[wide, labelwidth=!, labelindent=10pt]
\item Using the basis of $A_{(1)}$ and $A_{(2)}$ we can identify the Lie algebra of degree $0$ graded linear endomorphisms of the graded vector space $A_{\left(1\right)} \oplus A_{\left(2\right)}$ with $\gl_{d} \oplus \gl_{1}$.
If $(X,\lambda) \in \gl_{d} \oplus \gl_{1}$ then equation \eqref{eqn: der} is equivalent to
	\begin{align*}
	\left(\sum_{k=1}^{d}X_{ki}e_{k}\right)e_{j} + e_{i} \left(\sum_{k=1}^{d}X_{kj}e_{k}\right) & = \p \left(M_{ij}e\right) \\
	\Leftrightarrow \;\;\; \sum_{k=1}^{d}X_{ki}M_{kj} e + \sum_{k=1}^{d}X_{kj}M_{ik} e & = \lambda M_{ij}e  
	\end{align*}
which holds for all $1\le i,j \le d$ if and only if $X^{t}M + MX  = \lambda M$.
Equation \eqref{eqn: der2} holds automatically because both terms on the left hand side are contained in $A_{\left(3\right)}=0$.
As $A$ is generated by $A_{\left(1\right)}$ (or equivalently because $M\ne 0$), the scalar $\lambda$ is uniquely determined by $X$ and therefore $\g \cong \omoM$.
\item In terms of the notation used in the proof of (a), the representation $\pi:\g_{\left(0\right)} \to \gl\left(A_{\left(2\right)}\right)$ is given by $\left(X,\lambda\right) \mapsto \lambda \in \gl_{1}$.
It follows that $\mathrm{Ker}\;\pi = \moM$ as claimed.
\item The grading on $\g$ is such that $\left[\g_{\left(1\right)},\g_{\left(1\right)}\right] \subseteq \g_{\left(2\right)} = 0$ and therefore $\g_{\left(1\right)}$ is abelian.
\item A degree $1$ graded linear endomorphism of the graded vector space $A$ automatically satisfies \eqref{eqn: der} resp.\ \eqref{eqn: der2} because both the left and right hand sides are contained in $A_{\left(3\right)} = 0$ resp.\ $A_{\left(4\right)}=0$.
This shows that $\g_{\left(1\right)} \cong \mathrm{Hom}\left(A_{\left(1\right)},A_{\left(2\right)}\right)$ as vector spaces and $\g_{\left(0\right)}$ representations (see the discussion proceeding Theorem \ref{thm: der} in \S \ref{sec: mainder} for the description of this representation.)
\item In this case $d=1$ so we can choose a basis $e_{1}$ of $A_{\left(1\right)}$ and $e$ of $A_{\left(2\right)}$ such that $e_{1}^{2}=e$.
There is then an isomorphism $k\left[t\right]/\left(t^{3}\right) \cong A$ determined by $t+\left(t^{3}\right) \mapsto e_{1}$.
The remaining statements follow from Remark \ref{rem: Witt} and the fact that $W(1)_{\left(-1\right)}$ is spanned by the partial derivative $\p_{t}$ and so has dimension $1 = \mathrm{dim}\;\mathrm{Hom}\left(A_{\left(1\right)},A_{\left(0\right)}\right)$.
The isomorphism $W(1) \cong \s_{2}$ is elementary \cite{StradeBookI}.
\item By assumption $d=2$ and $M$ is congruent to $\Pi_{2}$ so we can choose the basis $e_{1},e_{2}$ of $A_{\left(1\right)}$ and $e$ of $A_{\left(2\right)}$ such that $e_{1}^{2}=e_{2}^{2}=0$ and $e_{1}e_{2}=e$.
There is then an isomorphism $k\left[t_{1},t_{2}\right]/\left(t_{1}^{2},t_{2}^{2}\right) \cong A$ determined by $t_{1}+\left(t_{1}^{2},t_{2}^{2}\right) \mapsto e_{1}$ and $t_{2}+\left(t_{1}^{2},t_{2}^{2}\right) \mapsto e_{2}$.
The remaining statements follow from Remark \ref{rem: Witt} and the fact that $W(2)_{\left(-1\right)}$ is spanned by the partial derivatives $\p_{t_{1}}$ and $\p_{t_{2}}$ and so has dimension $2 = \mathrm{dim}\;\mathrm{Hom}\left(A_{\left(1\right)},A_{\left(0\right)}\right)$.
\item If statement \eqref{der: other}\ref{other: g(-1)} that $\g_{\left(-1\right)}=0$ holds then statements \eqref{der: other}\ref{other: isom}-\ref{other: iterated} follow:
\begin{enumerate}[label=(\roman*)]
\setcounter{enumii}{1}
	\item The isomorphism $\g \cong \g_{\left(0\right)} \ltimes \mathrm{Hom}\left(A_{\left(1\right)},A_{\left(2\right)}\right)$ follows from the grading on $\g$ and statements \eqref{der: g1ab} and \eqref{der: g0g1}.
	\item The isomorphism $\rad\left(\g\right) \cong \rad\left(\omoM\right) \ltimes \mathrm{Hom}\left(A_{\left(1\right)},A_{\left(2\right)}\right)$ follows from statement \eqref{der: g0} and Proposition \ref{prop: sd}\eqref{sd: rad}.
	\item The isomorphism $\g^{\ssimp} \cong \omoM^{\ssimp}$ follows from statement \eqref{der: g0} and Proposition \ref{prop: sd}\eqref{sd: ss}.
	\item The iterated semi-direct products follow from statement \eqref{der: other}\ref{other: isom}, statement \eqref{der: g0}, and Theorem \ref{thm: table}.
\end{enumerate}
It remains to show that the assumptions in statement \eqref{der: other} imply that $\g_{\left(-1\right)}=0$.

We first establish some general facts about $\g_{\left(-1\right)}$.
We can identify the vector space $\mathrm{Hom}\left(A_{\left(1\right)},A_{\left(0\right)}\right) \oplus \mathrm{Hom}\left(A_{\left(2\right)},A_{\left(1\right)}\right)$ of degree $-1$ graded linear endomorphisms of the graded vector space $A$ with the vector space $k^{d}\oplus k^{d}$, where a pair $\left(v,w\right)$ corresponds to the pair of linear maps $e_{i} \mapsto v_{i}1_{A}$ and $e\mapsto \sum_{i=1}^{d}w_{i}e_{i}$.
If $\p=\left(v,w\right)$ is such a pair then \eqref{eqn: der} is equivalent to
	\begin{align}
	\nonumber
	\left(v_{i}1_{A}\right)e_{j} + e_{i}\left(v_{j}1_{A}\right) & = \chi\left(M_{ij}e\right) \\
	\label{eqn: der-11} \Leftrightarrow \;\;\; v_{i}e_{j} + v_{j}e_{i} & = \sum_{l=1}^{d}M_{ij}w_{l}e_{l}
	\end{align}
and \eqref{eqn: der2} is equivalent to
	\begin{align}
	\nonumber \left(v_{i}1_{A}\right)e + e_{i}\left(\sum_{l}^{d}w_{l}e_{l}\right) & = 0 \\
	\nonumber \Leftrightarrow \;\;\; v_{i}e + \sum_{l=1}^{d}M_{il}w_{l}e & = 0 \\
	\label{eqn: der-12} \Leftrightarrow \;\;\;  v_{i} + \sum_{l=1}^{d}M_{il}w_{l} & = 0.
	\end{align}
If $\p$ is a derivation then it is determined by its action on $A_{\left(1\right)}$ and therefore by $v$.
In particular, to show that $\g_{\left(-1\right)}=0$ it is sufficient to show that the equations \eqref{eqn: der-11} and \eqref{eqn: der-12} have no non-zero common solutions.

Returning to the assumptions in the statement, as $A$ is either commutative or graded commutative the matrix $M$ is either symmetric or antisymmetric.
By changing the basis of $A$ if necessary we can assume that either $d=m+n$ and $M = 0_{m}\oplus D$, or $d=m+2n$ and $M=0_{m}\oplus \Pi_{2n}$.
We deal with the possible cases in turn. 

Suppose that $\mathrm{char}\,k\ne 3$, $d=1$ and $M=D$.
Then $D \in k$, $v,w \in k$, and equations \eqref{eqn: der-11} and \eqref{eqn: der-12} are equivalent to the pair of equations
	\begin{align}
	\label{eqn: derI}
	\begin{cases}
	2v = Dw \\
	v + Dw = 0.
	\end{cases}
	\end{align}
which have no non-zero solutions. 

Suppose that $\mathrm{char}\,k\ne 2$, $d=2$ and $M=\Pi_{2}$.
Then $M_{11}=M_{22}=0$ and so the $i=j=1$ and $i=j=2$ cases of equation \eqref{eqn: der-11} are 
	\begin{align}
	\label{eqn: derP1} 
	\begin{cases} 
	2v_{1}e_{1} = 0 \\
	2v_{2}e_{2} = 0 
	\end{cases}
	\end{align}
which have no non-zero solutions.

In each of the remaining cases either $d=2$ and $M \ne \Pi_{2}$, or $d\ge 3$.
Then for each $1\le i \le d$ we can choose $1\le j\le d$ such that $j\ne i$ and $M_{ij}=0$.
In this case \eqref{eqn: der-11} is
	\[
	v_{i}e_{j} + v_{j}e_{i} = 0 
	\]
which implies that $v_{i}=0$ by the linear independence of $e_i$ and $e_j$.
\qedhere
\end{enumerate}
\end{proof}
% >>>
% end subsection >>>
% end section >>>

% Structure of 0(M) <<< 
\section{The structure of $\moM$ for $M$ symmetric or antisymmetric}
\label{sec: struc}

In this section we compute the radicals and semisimple quotients of the Lie algebras $\moM$ when $M$ is symmetric or antisymmetric.
It follows from the discussion in \S \ref{sec: set-up} that it is sufficient to do this for the cases $d=m+n$ with $M=0_{m}\oplus D$ and $D \in k^{n\times n}$ nonsingular and diagonal, and $d=m+2n$ with $M=0_{m}\oplus \Pi_{2n}$.
In each case we also determine for which $m$ and $n$ is $\moM$ solvable, and in the non-solvable cases describe the derived series.
Throughout \S \ref{sec: struc} the symbols $m$ and $n$ denote \emph{positive} integers (since otherwise either $M=0$ or $M$ is nondegenerate).

The proofs of most of the statements in the Propositions \ref{prop: OI2}-\ref{prop: OP} follow immediately from Theorem \ref{thm: gen} and the facts about $\gl_{m}$, $\s_{m}$, $\so_{n}$, $\moD$ and $\spl_{2n}$ collected in Appendix \ref{sec: bg}.
We give proofs of the statements requiring a further argument, which is in most cases an application of Proposition \ref{prop: gV} below.
We note that the formulas for $\rad\left(\moM\right)$, $\moM^{\ssimp}$ and $\moM^{\left(1\right)}$ stabilise for large enough $m$ and $n$ but differ for small values where $\gl_{m}$, $\so_{n}$ and $\moD$ may be solvable.
% prop <<<
\begin{prop}
\label{prop: gV}
In any of the following situations it holds that $\g \bV_{\g}= \bV_{\g}$:
\begin{enumerate}
\item \label{gV: I} $\mathrm{char}\,k=2$, $\g=\mo(D)^{\left(1\right)}$ and $n\ge 3$ where $D \in k^{n\times n}$ is nonsingular and diagonal.
\item \label{gV: so} $\mathrm{char}\,k\ne 2$, $\g=\mo(D)$ and $n\ge 3$ where $D\in k^{n\times n}$ is nonsingular and diagonal. 
\item \label{gV: spl2} $\mathrm{char}\,k=2$, $\g=\spl_{2n}^{\left(2\right)}$ and $n\ge 3$.
\item \label{gV: spl} $\mathrm{char}\,k\ne2$ and $\g=\spl_{2n}$.
\end{enumerate}
\end{prop}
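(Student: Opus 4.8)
The plan is to prove each of the four cases by identifying $\bV_{\g}$ explicitly as a $\g$-module and checking that $\g$ does not annihilate it (equivalently, that the submodule $\g\bV_{\g}$ is all of $\bV_{\g}$). In every case $\bV_{\g}$ is a small explicit module — either $k^n$ with its standard action or a twist thereof — so the key point is to exhibit, for a spanning set of basis vectors $e_i$ of $\bV_{\g}$, an element $X\in\g$ and a vector $v\in\bV_{\g}$ with $X\cdot v = e_i$ (up to nonzero scalar). I expect the main obstacle to be case \eqref{gV: I}, and more generally the characteristic $2$ cases, where one must work with the derived subalgebra rather than all of $\mo(D)$ or $\spl_{2n}$, so one has fewer elementary matrices available and must use explicit generators of the derived subalgebra together with the facts recorded in Appendix \ref{sec: bg}.

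First I would dispatch \eqref{gV: spl}: when $\mathrm{char}\,k\ne 2$ the symplectic Lie algebra $\spl_{2n}$ acts irreducibly on its vector representation $k^{2n}$ (this is classical and recorded in the appendix), so the submodule $\g\bV_\g$, being nonzero — for instance $\Pi_{2n}\cdot e_1 \ne 0$ — must equal $\bV_\g$. Next, for \eqref{gV: so}, with $\mathrm{char}\,k\ne 2$ and $D$ nonsingular diagonal of size $n\ge 3$, the Lie algebra $\mo(D)$ consists of the matrices $X$ with $X^tD + DX = 0$, i.e.\ $DX$ antisymmetric; for $i\ne j$ the matrix $X^{(ij)}$ with $X^{(ij)} = D_{jj}E_{ij} - D_{ii}E_{ji}$ (suitably normalized so that $DX^{(ij)}$ is antisymmetric) lies in $\mo(D)$, and $X^{(ij)}\cdot e_j = D_{jj} e_i$, which is a nonzero multiple of $e_i$. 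Since $n\ge 3 \ge 2$ we can do this for every $i$, so $\g\bV_\g = \bV_\g$. (The hypothesis $n\ge 3$ is in fact only needed so that $\mo(D)$ is not among the small exceptional cases; $n\ge 2$ already suffices for the module statement, but I will keep the stated hypotheses.)

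For the characteristic $2$ cases I would argue as follows. In \eqref{gV: spl2}, by the appendix $\spl_{2n}^{(2)}$ is (a central extension of) the simple Lie algebra $\psl_{2n}$-type object for $n\ge 3$, or at least it is known that $\spl_{2n}^{(2)}$ still acts irreducibly on $k^{2n}$ in this range (the vector representation remains irreducible since it is a minuscule module and the relevant root vectors all lie in the derived series); one then concludes as in the symplectic case over fields of characteristic not $2$ — the submodule $\g\bV_\g$ is nonzero and hence everything. For \eqref{gV: I}, with $\mathrm{char}\,k = 2$, $n\ge 3$, and $\g = \mo(D)^{(1)}$: here $\mo(D)$ in characteristic $2$ is larger than the orthogonal Lie algebra (the diagonal matrices satisfying $2 X_{ii} D_{ii} = 0$ are unconstrained), and one checks from the appendix that $\mo(D)^{(1)}$ is spanned by the elementary "off-diagonal rotations" $X^{(ij)}$ as above (which for $D$ diagonal take the form $D_{jj}E_{ij} + D_{ii}E_{ji}$ since $-1 = 1$) together with suitable diagonal elements arising as brackets. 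Applying $X^{(ij)}$ to $e_j$ gives $D_{jj}e_i + D_{ii}$(component along $e_j$), but bracketing two such and restricting, or simply taking $X^{(ij)}\cdot e_j$ and using that $X^{(ij)}\cdot e_j$ has a nonzero $e_i$-component while a second generator kills the residual $e_j$-component, shows every $e_i\in \g\bV_\g$; the hypothesis $n\ge 3$ guarantees enough independent indices $j\ne i$ to carry this out and guarantees $\mo(D)^{(1)}$ is in the "generic" range described in the appendix. In all four cases the conclusion $\g\bV_\g = \bV_\g$ then follows, completing the proof.
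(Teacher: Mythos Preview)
Your strategy matches the paper's: exhibit explicit elements of $\g$ that carry basis vectors to basis vectors, so that $\g\bV_\g$ contains a spanning set. Two places in your execution are loose and should be tightened.

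In case \eqref{gV: I} you miscompute: for $i\ne j$ one has $E_{ji}e_j = 0$, so $X^{(ij)}e_j = D_{jj}e_i$ exactly, with no residual $e_j$-component. There is nothing further to ``kill'' and no need to invoke $n\ge 3$ at this step; the argument is immediate once you have $X^{(ij)}\in\mo(D)^{(1)}$, which follows from the description of $\mo(D)^{(1)}$ as the zero-diagonal elements of $\mo(D)$ in the appendix. The paper proceeds the same way after passing to the algebraic closure so that $D=\bI_n$, using the generators $E_{1j}+E_{j1}\in\mo(\bI_n)^{(1)}$; your version, working directly with $D$, is arguably cleaner once the computation is corrected.

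In case \eqref{gV: spl2} you rely on the assertion that $k^{2n}$ is irreducible over $\spl_{2n}^{(2)}$, justified only by a hand-wave about minuscule weights and the appendix. The appendix does not state this, and the minuscule heuristic is not straightforward for type $C$ in characteristic $2$. Irreducibility is in fact true, but it is stronger than what you need and you have not proved it. A direct argument is both easier and self-contained: for $i\ne j$ the block-diagonal matrix with $A=E_{ij}$, $B=C=0$ lies in $\spl_{2n}^{(2)}$ (since $E_{ij}\in\s_n$ and $0$ is alternating) and sends $e_j\mapsto e_i$ and $e_{n+i}\mapsto e_{n+j}$; varying $i,j$ over pairs with $i\ne j$ produces every standard basis vector. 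The paper also argues directly with explicit block-diagonal elements, though its stated choice $A=E_{ii}$ is a slip since $E_{ii}\notin\s_n$; the fix is the one just given.
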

% >>>
% proof <<<
\begin{proof}
\begin{enumerate}[wide, labelwidth=!, labelindent=0pt]
\item It is sufficient to pass to the algebraic closure of $k$ so we can assume that $D=\bI_{n}$ by \S \ref{sec: char2oD}.
Let $E_{ij}$ resp.\ $e_{i}$ denote the standard basis vectors of $\gl_{m}$ resp.\ $k^{m}$. For $2\le j \le m$ we have $\left(E_{1j}+E_{j1}\right) \in \moM^{\left(1\right)}$ and $\left(E_{1j}+E_{j1}\right)e_{1} = e_{j}$. The statement follows.
\item The proof is the same as that of (a).
\item Let $E_{ij}$ resp.\ $e_{l}$ denote the standard basis vectors of $\gl_{n}$ resp.\ $k^{2n}$.
Using the description of the elements of $\spl_{2n}$ in \ref{sec: char2spl}, for $1\le i \le n$ let $X_{i}$ denote the matrix of the form \eqref{eqn: spnmat} with $A=E_{ii}$ and $B=C=0$ .
Then $X_{i} \in \spl_{2n}^{\left(2\right)}$, $X_{i}e_{i}=e_{i}$ and $X_{i}e_{i+n}=e_{i+n}$. The statement follows.
\item In this case $\bV_{\g}$ is simple \cite{HumphreysBook72}, but the statement can also be checked directly by the same argument as in \eqref{gV: spl2} replacing the lower right $n\times n$ block in $X$ by $-E_{ii}$.
\qedhere
\end{enumerate}
\end{proof} 
% >>>
% prop: M=0+I, char = 2 <<<
\begin{prop}
\label{prop: OI2}
Suppose that $\mathrm{char} \, k = 2$, $d=m+n$ and $M=0_{m}\oplus D$, where $D \in k^{n\times n}$ is nonsingular and diagonal.
\begin{enumerate}
\item 
	\begin{align*}
	\rad\left(\moM\right) & \cong \begin{cases}
	\moM   &   \text{if} \; m,n \le 2\\
	\left(\gl_{1} \oplus \moD \right) \ltimes \mathrm{Hom}\left(\bV_{\moD},k_{1}\right)  & \text{if} \; m\ge3,n\le 2 \\
	\left(\gl_{m} \oplus \gl_{1} \right) \ltimes \mathrm{Hom}\left(k_{1}^{\oplus n},\bV_{\gl_{m}}\right) & \text{if} \; m\le2, n\ge 3 \\
	\left(\gl_{1} \oplus \gl_{1} \right) \ltimes \mathrm{Hom}\left(k_{1}^{\oplus n},k_{1}^{\oplus m}\right) & \text{if} \; m\ge 3, n\ge 3. \\
	\end{cases}
	\end{align*}
\item $\moM^{\ssimp} \cong \gl_{m}^{\ssimp} \oplus \moD^{\ssimp}$.
In particular, if $m,n \ge 3$ then
	\begin{align*} 
	\moM^{\ssimp} \cong \begin{cases}
	\pgl_{m} \oplus \moD/k\bI_{n} & \;\text{ if }\; 2|m \\
	\s_{m} \oplus \moD/k\bI_{n} & \;\text{ if }\; 2\nmid m.
	\end{cases}
	\end{align*}
\item If either $m\ge 3$ or $n\ge 3$ then for $i \ge 1$
	\[
	\moM^{\left(i\right)} \cong \left(\s_{m}^{\left(i-1\right)} \oplus \moD^{\left(i\right)} \right) \ltimes \mathrm{Hom}\left(\bV_{\moD^{\left(i\right)}},\bV_{\s_{m}^{\left(i-1\right)}}\right).
	\]
In particular, if $m,n\ge 3$ then
	\begin{align*}
	\moM^{\left(1\right)} & \cong \left(\s_{m} \oplus \moD^{\left(1\right)} \right) \ltimes \mathrm{Hom}\left(\bV_{\moD^{\left(1\right)}},\bV_{\s_{m}}\right) \\
	\moM^{\left(2\right)} & = \moM^{\left(1\right)}.
	\end{align*}
\item $\moM$ is solvable if and only if $m,n \le 2$.
\end{enumerate}
\end{prop}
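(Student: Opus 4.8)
The plan is to reduce statement (d) to the question of when $\gl_m$ and $\moD$ are solvable in characteristic $2$, and then settle that using the structural facts already recorded. First I would apply Theorem~\ref{thm: gen}\eqref{gen: isom} (with $N = D$) to write $\moM \cong \left(\gl_m \oplus \moD\right) \ltimes V$, where $V = \mathrm{Hom}\left(\bV_{\moD},\bV_{\gl_m}\right)$ is an abelian ideal, and then invoke Proposition~\ref{prop: sd}\eqref{sd: sol}: such a semidirect product is solvable if and only if $\gl_m \oplus \moD$ is. Since a direct sum of Lie algebras is solvable precisely when each summand is, it remains to show that, in characteristic $2$, $\gl_m$ is solvable exactly when $m \le 2$ and $\moD$ is solvable exactly when $n \le 2$.

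For the ``if'' direction this is already contained in statement (a), which gives $\rad\left(\moM\right) \cong \moM$ when $m, n \le 2$; alternatively one checks it by hand, the relevant point being that in characteristic $2$ one has $\gl_2^{\left(1\right)} = \s_2$, which is the three-dimensional Heisenberg algebra (so $\gl_2^{\left(3\right)} = 0$), and that, after extending scalars so that $D$ is congruent to $\bI_n$, $\moD$ is the algebra of symmetric matrices, which is visibly solvable for $n \le 2$. For the ``only if'' direction, solvability is unaffected by field extension, so I may assume $k$ algebraically closed and $D = \bI_n$; then for $m \ge 3$ the Lie algebra $\gl_m^{\left(1\right)} = \s_m$ has a nonzero simple quotient (Appendix~\ref{sec: bg}), so $\gl_m$ is not solvable, and for $n \ge 3$ the facts about $\moD$ collected in \S\ref{sec: char2oD} (consistent with Proposition~\ref{prop: gV}\eqref{gV: I}, which shows $\moD^{\left(1\right)}$ acts nontrivially on $\bV_{\moD}$) exhibit a nonzero perfect ideal of $\moD$ --- the zero-diagonal symmetric matrices --- so $\moD$ is not solvable. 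Assembling the four cases gives the equivalence.

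A more economical route, which I would probably adopt in the write-up, is to observe that a Lie algebra is solvable if and only if its maximal semisimple quotient vanishes, and that by statement (b) --- equivalently by Proposition~\ref{prop: sd}\eqref{sd: ss} together with $\left(\gl_m \oplus \moD\right)^{\ssimp} \cong \gl_m^{\ssimp} \oplus \moD^{\ssimp}$ --- one has $\moM^{\ssimp} \cong \gl_m^{\ssimp} \oplus \moD^{\ssimp}$; this vanishes exactly when both summands do, i.e.\ exactly when $m \le 2$ and $n \le 2$. I do not anticipate a genuine obstacle: the one point needing care is that the characteristic $2$ behaviour differs from the characteristic $0$ picture --- in particular $\gl_2$, and $\moD$ for $n = 2$, are solvable when $\mathrm{char}\,k = 2$ --- and establishing the resulting non-solvability for $m, n \ge 3$ is precisely what the structural facts in Appendix~\ref{sec: bg} are there to supply.
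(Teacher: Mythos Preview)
Your proposal only treats part (d), whereas the proposition has four parts; you invoke (a) and (b) as already-established facts, but they are themselves part of what is being claimed. That said, the circularity is easily removed: as you yourself note, the ``economical route'' replaces the appeal to (b) by a direct appeal to Theorem~\ref{thm: gen}\eqref{gen: ss} and Proposition~\ref{prop: sd}\eqref{sd: ss}, and the appeal to (a) is unnecessary once you have Proposition~\ref{prop: sd}\eqref{sd: sol}.

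For (d) itself your argument is correct and is essentially the paper's: both reduce solvability of $\moM$ to solvability of $\gl_m$ and $\moD$ via the semidirect product decomposition (Theorem~\ref{thm: gen}\eqref{gen: isom}) and then read off the answer from the structural facts in \S\ref{sec: char2glsl} and \S\ref{sec: char2oD}. The paper bundles (a), (b), (d) into a single sentence citing Theorem~\ref{thm: gen}\eqref{gen: rad},\eqref{gen: ss} together with the Appendix; your two routes --- via Proposition~\ref{prop: sd}\eqref{sd: sol} or via the vanishing of $\moM^{\ssimp}$ --- are just two ways of packaging the same reduction, and the paper's proof is closer to the second. The remaining parts (a), (b), (c) are handled by the same machinery (Theorem~\ref{thm: gen}\eqref{gen: rad}--\eqref{gen: der2} and Proposition~\ref{prop: gV}\eqref{gV: I}), so extending your write-up to cover them would be routine.
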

% >>>
% proof <<<
\begin{proof}
(a),(b) and (d) follow from Theorem \ref{thm: gen}\eqref{gen: rad} and \eqref{gen: ss}, the radical, semisimple quotient and derives series of $\gl_{m}$ \S \ref{sec: char2glsl} and $\moD$ \S \ref{sec: char2oD}, and the fact that the restriction of the vector representation of $\gl_{m}$ to $k\bI_{m} \cong \gl_{1}$ is isomorphic to $k_{1}^{\oplus m}$.

(c) The first statement follows from Proposition \ref{prop: gV}\eqref{gV: I} and Theorem \ref{thm: gen}\eqref{gen: der2}.
The second statement follows from Theorem \ref{thm: gen}\eqref{gen: der2} and the fact that if $m,n\ge 3$ then $\s_{m}$ and $\moD^{\left(1\right)}$ are perfect \S \ref{sec: char2glsl}, \S \ref{sec: char2oD}.
\end{proof}
% >>>
% prop: M=0+I, char \ne 2 <<<
\begin{prop}
\label{prop: OI}
Suppose that $\mathrm{char} \, k \ne 2$, $d=m+n$ and $M=0_{m}\oplus D$ where $D \in k^{n\times n}$ is nonsingular and diagonal.
\begin{enumerate}
\item  
	\begin{align*}
	\rad\left(\moM\right) & \cong 
	\begin{cases}
	\gl_{1} \ltimes k_{1}^{\oplus mn} & \text{if} \; n=1 \; \text{or} \; n\ge 3 \\
	\left( \gl_{1} \oplus \moD \right) \ltimes \mathrm{Hom}\left(\bV_{\moD} , k_{1}\right) & \text{if} \; n=2.
	\end{cases}
	\end{align*}
\item $\moM^{\ssimp} \cong \gl_{m}^{\ssimp} \oplus \moD^{\ssimp}$.
In particular, if $m\ge 2$ and $n\ge 3$ then
	\begin{align*}
	\moM^{\ssimp} \cong \begin{cases}
	\pgl_{m} \oplus \moD & \;\text{ if }\; \mathrm{char}\,k |m \\
	\s_{m} \oplus \moD & \;\text{ if }\; \mathrm{char}\,k\nmid m.
	\end{cases}
	\end{align*}
\item If $m\ge 2$ or $n\ge 3$ then for $i \ge 1$
	\[
	\moM^{\left(i\right)} \cong \left(\s_{m}^{\left(i-1\right)} \oplus \moD^{\left(i\right)}\right) \ltimes \mathrm{Hom}\left(\bV_{\moD^{\left(i\right)}},\bV_{\s_{m}^{\left(i-1\right)}}\right).
	\]
In particular, if $m\ge 2$ and $n\ge 3$ then
	\begin{align*}
	\moM^{\left(1\right)} & \cong \left(\s_{m} \oplus \moD\right) \ltimes \mathrm{Hom}\left(\bV_{\moD},\bV_{\s_{m}}\right) \\
	\moM^{\left(2\right)} & = \moM^{\left(1\right)}.
	\end{align*}
\item $\moM$ is solvable if and only if $m\le 1$ and $n\le 2$.
\end{enumerate}
\end{prop}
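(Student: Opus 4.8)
The plan is to argue exactly as in the proof of Proposition~\ref{prop: OI2}, substituting the $\mathrm{char}\,k\ne 2$ facts about $\gl_m$, $\s_m$ and $\moD$ for the characteristic $2$ ones. By Theorem~\ref{thm: gen}\eqref{gen: isom} (with $N=D$), and the remark following its proof, we may work throughout with $\moM \cong \left(\gl_m \oplus \moD\right) \ltimes \mathrm{Hom}\left(\bV_{\moD}, \bV_{\gl_m}\right)$. The ingredients I would take from Appendix~\ref{sec: bg} are: since $\mathrm{char}\,k \ne 2$, the equation $X^tD+DX=0$ forces $\moD=0$ when $n=1$, forces $\moD$ to be one--dimensional and abelian when $n=2$, and makes $\moD$ (a form of $\so_n$) semisimple and perfect when $n\ge 3$; moreover $\rad\left(\gl_m\right)=k\bI_m\cong\gl_1$ for every $m\ge 1$, $\gl_m^{(1)}=\s_m$ with $\s_m$ perfect for $m\ge 2$, and the restriction of $\bV_{\gl_m}$ along $k\bI_m \subseteq \gl_m$ is isomorphic to $k_1^{\oplus m}$.

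For parts (a), (b) and (d) I would feed these into the general results. Part (d): by Proposition~\ref{prop: sd}\eqref{sd: sol}, $\moM$ is solvable iff $\gl_m \oplus \moD$ is, i.e.\ iff both $\gl_m$ and $\moD$ are solvable, which by the above happens precisely when $m\le 1$ and $n\le 2$. Part (b) is Theorem~\ref{thm: gen}\eqref{gen: ss} verbatim, and the displayed special case for $m\ge 2$, $n\ge 3$ follows since then $\moD^{\ssimp}=\moD$ and $\gl_m^{\ssimp}$ equals $\pgl_m$ or $\s_m$ according as $\mathrm{char}\,k$ does or does not divide $m$. Part (a) is Theorem~\ref{thm: gen}\eqref{gen: rad}, namely $\rad\left(\moM\right)\cong\left(\rad\left(\gl_m\right)\oplus\rad\left(\moD\right)\right)\ltimes\mathrm{Hom}\left(\bV_{\moD},\bV_{\gl_m}\right)$, split according to $\rad\left(\moD\right)$: it is $0$ when $n=1$ or $n\ge 3$, giving $\rad\left(\moM\right)\cong\gl_1 \ltimes \mathrm{Hom}\left(\bV_{\moD},\bV_{\gl_m}\right)$, which after restricting $\bV_{\gl_m}$ to $k\bI_m$ becomes $\gl_1 \ltimes k_1^{\oplus mn}$; and it is all of $\moD$ when $n=2$, giving the second displayed case after the same restriction.

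The only part needing a genuine argument is (c), and here I would imitate the proof of Proposition~\ref{prop: OI2}(c). For $i=1$ the formula is Theorem~\ref{thm: gen}\eqref{gen: der}. For $i\ge 2$ I would apply Theorem~\ref{thm: gen}\eqref{gen: der2}, the key point being to verify one of its hypotheses in each subcase: if $m\ge 2$ then hypothesis (ii) holds outright, and if $n\ge 3$ then hypothesis (iii) holds, because $\moD$ is perfect (so $\moD^{(i-1)}=\moD$) and $\moD\,\bV_{\moD}=\bV_{\moD}$ by Proposition~\ref{prop: gV}\eqref{gV: so}. To rewrite the conclusion in the stated form one uses that $\s_m^{(i-1)}=\s_m$ for $i\ge 1$ when $m\ge 2$ (as $\s_m$ is perfect) and $\s_m^{(i-1)}=0$ when $m\le 1$; the displayed consequences for $m,n\ge 3$ then follow from $\moD^{(1)}=\moD$, together with the fact that $\moM^{(1)}\cong\left(\s_m\oplus\moD\right)\ltimes\mathrm{Hom}\left(\bV_{\moD},\bV_{\s_m}\right)$ is perfect (Proposition~\ref{prop: sd}\eqref{sd: perf}), so that $\moM^{(2)}=\moM^{(1)}$.

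I do not anticipate a real obstacle: everything reduces to bookkeeping with semidirect products via Proposition~\ref{prop: sd} plus the appendix facts, and the one non-formal input, $\moD\,\bV_{\moD}=\bV_{\moD}$ for $n\ge 3$, has already been established in Proposition~\ref{prop: gV}\eqref{gV: so}. The only place demanding a little care is matching up the small cases ($n=1$, $n=2$, and $m=1$) in the case analysis for (a) and (c), where $\gl_m$ or $\moD$ degenerates.
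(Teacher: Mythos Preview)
Your proposal is correct and follows essentially the same route as the paper: parts (a), (b), (d) are read off from Theorem~\ref{thm: gen}\eqref{gen: rad},\eqref{gen: ss} together with the Appendix facts on $\gl_m$ and $\moD$, and part (c) is deduced from Theorem~\ref{thm: gen}\eqref{gen: der},\eqref{gen: der2} after checking hypothesis (ii) when $m\ge 2$ and hypothesis (iii) via Proposition~\ref{prop: gV}\eqref{gV: so} when $n\ge 3$. The only slip is cosmetic: in your last paragraph ``$m,n\ge 3$'' should read ``$m\ge 2$ and $n\ge 3$'' to match the statement.
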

% >>>
% proof <<<
\begin{proof}
(b) and (d) follow from Theorem \ref{thm: gen}\eqref{gen: ss} and the radical, semisimple quotient and derived series of $\gl_{m}$ \S \ref{sec: charn2glsl} and $\moD$ \S \ref{sec: charn2oD}.

(a) If $n=1$ or $n \ge 3$ then $\rad\left(\moD\right)=0$ \S \ref{sec: char2oD} and the restriction of $\mathrm{Hom}\left(\bV_{\moD},\bV_{\gl_{m}}\right)$ to $k\bI_{m} \cong \gl_{1}$ is isomorphic to $k_{1}^{\oplus mn}$.
If $n=2$ then $\moD$ is solvable \S \ref{sec: charn2oD}.
In either case the statement then follows from Theorem \ref{thm: gen}\eqref{gen: rad}.

(c) The first statement follows from Proposition \ref{prop: gV}\eqref{gV: I} and Theorem \ref{thm: gen}\eqref{gen: der2}.
The second statement follows from Theorem \ref{thm: gen}\eqref{gen: der2} and the fact that if $m\ge 2$ and $n\ge 3$ then $\s_{m}$ and $\moD$ are perfect \S \ref{sec: charn2glsl}, \S \ref{sec: charn2oD}.
\end{proof}
% >>>
% prop: M=0+Pi, char = 2 <<<
\begin{prop}
\label{prop: OP2}
Suppose that $\mathrm{char} \, k = 2$, $d=m+2n$ and $M=0_{m}\oplus \Pi_{2n}$.
\begin{enumerate}
\item  
	\begin{align*}
	\rad\left(\moM\right) & \cong \begin{cases}
	\moM & \text{if}\; m,n \le 2 \\
	\left(\gl_{m} \oplus \gl_{1}\right) \ltimes \mathrm{Hom}\left(k_{1}^{\oplus n},\bV_{\gl_{m}}\right) 
	& \text{if} \; m\le 2, n\ge 3 \\
	\left(\gl_{1}\oplus \spl_{2n}\right) \ltimes \mathrm{Hom}\left(\bV_{\spl_{2n}},k_{1}^{\oplus m}\right) 
	& \text{if} \; m\ge 3, n\le 2 \\
	\left(\gl_{1}\oplus\gl_{1}\right) \ltimes \mathrm{Hom}\left(k_{1}^{\oplus m},k_{1}^{\oplus m}\right) 
	& \text{if} \; m,n \ge 3. \\
	\end{cases}
	\end{align*}
\item $\moM^{\ssimp} \cong \gl_{m}^{\ssimp} \oplus \spl_{2n}^{\ssimp}$.
In particular, if $m,n\ge 3$ then
	\begin{align*}
	\moM^{\ssimp} & \cong \begin{cases}
	\pgl_{m} \oplus \spl_{2n}/k\bI_{2n} & \;\text{ if }\; 2|m \\
	\s_{m} \oplus \spl_{2n}/k\bI_{2n} & \;\text{ if }\; 2\nmid m.
	\end{cases}
	\end{align*}
\item If $m\ge 3$ or $n\ge 3$ then for $i \ge 1$
	\[
	\moM^{\left(i\right)} \cong \left(\s_{m}^{\left(i-1\right)} \oplus \spl_{2n}^{\left(i\right)}\right) \ltimes \mathrm{Hom}\left(\bV_{\spl_{2n}^{\left(i\right)}},\bV_{\s_{m}^{\left(i-1\right)}}\right).
	\]
In particular, if $m,n\ge 3$ then
	\begin{align*}
	\moM^{\left(1\right)} & \cong \left(\s_{m} \oplus \spl_{2n}^{\left(1\right)}\right) \ltimes \mathrm{Hom}\left(\bV_{\spl_{2n}^{\left(1\right)}},\bV_{\s_{m}}\right) \\
	\moM^{\left(2\right)} & \cong \left(\s_{m} \oplus \spl_{2n}^{\left(2\right)}\right) \ltimes \mathrm{Hom}\left(\bV_{\spl_{2n}^{\left(2\right)}},\bV_{\s_{m}}\right) \\
	\moM^{\left(3\right)} & = \moM^{\left(2\right)}.
	\end{align*}
\item $\moM$ is solvable if and only if $m,n \le 2$.
\end{enumerate}
\end{prop}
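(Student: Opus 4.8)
The plan is to mirror the proofs of Propositions \ref{prop: OI2} and \ref{prop: OI}, deducing everything from Theorem \ref{thm: gen} together with the structure of $\gl_m$ in characteristic $2$ (\S \ref{sec: char2glsl}) and of $\spl_{2n}$ in characteristic $2$ (\S \ref{sec: char2spl}), the latter playing the role that $\moD$ plays in Proposition \ref{prop: OI2}. For (a), (b) and (d) I would substitute into Theorem \ref{thm: gen}\eqref{gen: rad} and \eqref{gen: ss} the radical, semisimple quotient and solvability criterion of $\gl_m$ (\S \ref{sec: char2glsl}) and of $\spl_{2n}$ (\S \ref{sec: char2spl}). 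The only extra observation needed is that, since $\mathrm{char}\,k=2$, the identity matrix acts on the vector representation as the scalar $1$, so the restriction of $\bV_{\gl_m}$ (resp.\ $\bV_{\spl_{2n}}$) to the central line $k\bI_m\cong\gl_1$ (resp.\ $k\bI_{2n}\cong\gl_1$) is a direct sum of copies of $k_1$; feeding this into $\mathrm{Hom}(\bV_{\spl_{2n}},\bV_{\gl_m})$ yields the four displayed forms of $\rad(\moM)$ according to whether $m\le 2$ and whether $n\le 2$, i.e.\ according to whether $\gl_m$ and $\spl_{2n}$ are solvable. Part (d) is then immediate: by Proposition \ref{prop: sd}\eqref{sd: sol} (equivalently by Theorem \ref{thm: gen}\eqref{gen: ss}) the Lie algebra $\moM\cong(\gl_m\oplus\spl_{2n})\ltimes\mathrm{Hom}(\bV_{\spl_{2n}},\bV_{\gl_m})$ is solvable if and only if both $\gl_m$ and $\spl_{2n}$ are, i.e.\ if and only if $m,n\le 2$.

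For (c) the case $i=1$ is Theorem \ref{thm: gen}\eqref{gen: der}. For $i\ge 2$ I would invoke Theorem \ref{thm: gen}\eqref{gen: der2}: if $m\ge 3$ then hypothesis (i) there holds for every $i$, while if $m\le 2$ the standing assumption of (c) forces $n\ge 3$, and it suffices to verify hypothesis (iii), i.e.\ $\spl_{2n}^{(i-1)}\bV_{\spl_{2n}}=\bV_{\spl_{2n}}$ for all $i\ge 2$. Using that $\spl_{2n}^{(2)}$ is perfect (\S \ref{sec: char2spl}), one has $\spl_{2n}^{(i-1)}\supseteq\spl_{2n}^{(2)}$ for all $i\ge 2$, so this reduces to the single identity $\spl_{2n}^{(2)}\bV_{\spl_{2n}}=\bV_{\spl_{2n}}$, which is Proposition \ref{prop: gV}\eqref{gV: spl2}. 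The explicit descriptions of $\moM^{(1)}$, $\moM^{(2)}$ and $\moM^{(3)}$ in the range $m,n\ge 3$ then follow by also using that $\s_m$ is perfect (\S \ref{sec: char2glsl}), so $\s_m^{(i-1)}=\s_m$ for every $i\ge 1$, and that $\spl_{2n}^{(1)}\ne\spl_{2n}^{(2)}$, which gives $\moM^{(1)}\supsetneq\moM^{(2)}=\moM^{(3)}=\cdots$.

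The main obstacle I anticipate is not conceptual but bookkeeping: one must track the degenerate small cases $m\in\{1,2\}$ and $n\in\{1,2\}$, where $\gl_m$ and $\spl_{2n}$ become solvable and the generic formulas for $\rad(\moM)$ and $\moM^{\ssimp}$ collapse onto the special forms listed in (a) and (b), taking care which summand of $\rad(\gl_m)\oplus\rad(\spl_{2n})$ survives and hence which Hom-space appears. The genuinely non-formal input — the derived series, radical and semisimple quotient of $\spl_{2n}$ in characteristic $2$ — is isolated in \S \ref{sec: char2spl}; once it is in hand, every assertion follows by substitution into Theorem \ref{thm: gen}, the one exception being the verification of hypothesis (iii) of Theorem \ref{thm: gen}\eqref{gen: der2} when $m\le 2$, which is exactly what Proposition \ref{prop: gV}\eqref{gV: spl2} supplies.
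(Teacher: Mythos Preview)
Your proposal is correct and follows essentially the same approach as the paper: parts (a), (b), (d) come directly from Theorem \ref{thm: gen}\eqref{gen: rad}, \eqref{gen: ss} together with the structure of $\gl_m$ and $\spl_{2n}$ in characteristic $2$ from \S\ref{sec: char2glsl}, \S\ref{sec: char2spl}, and part (c) comes from Theorem \ref{thm: gen}\eqref{gen: der}, \eqref{gen: der2} combined with Proposition \ref{prop: gV}\eqref{gV: spl2} and the perfectness of $\s_m$ and $\spl_{2n}^{(2)}$ for $m,n\ge 3$. In fact your citation of Proposition \ref{prop: gV}\eqref{gV: spl2} is the intended one; the paper's own proof cites \eqref{gV: I} here, which appears to be a copy--paste slip from the proof of Proposition \ref{prop: OI2}.
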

% >>>
% proof <<<
\begin{proof}
(a),(b) and (d) follow from Theorem \ref{thm: gen}\eqref{gen: rad} and \eqref{gen: ss} and the radical, semisimple quotient and derived series of $\gl_{m}$ \S \ref{sec: char2glsl} and $\spl_{2n}$ \S \ref{sec: char2spl}.

\item (c) The first statement follows from Proposition \ref{prop: gV}\eqref{gV: I} and Theorem \ref{thm: gen}\eqref{gen: der2}.
The second statement follows from Theorem \ref{thm: gen}\eqref{gen: der2} and the fact that if $m,n\ge 3$ then $\s_{m}$ and $\spl_{2n}^{\left(2\right)}$ are perfect \S \ref{sec: char2glsl}, \S \ref{sec: char2spl}.
\end{proof}
% >>>
% prop: M=0+Pi, char \ne 2 <<<
\begin{prop}
\label{prop: OP}
Suppose that $\mathrm{char} \, k \ne 2$, $d=m+n$ and $M=0_{m}\oplus \Pi_{2n}$.
\begin{enumerate}
\item $\rad\left(\moM\right)  \cong \gl_{1} \ltimes k_{1}^{\oplus mn}$.
\item $\moM^{\ssimp} \cong \gl_{m}^{\ssimp} \oplus \spl_{2n}$.
In particular, if $m\ge2$ then
	\begin{align*}
	\moM^{\ssimp} & \cong \begin{cases}
	\pgl_{m} \oplus \spl_{2n} & \;\text{ if }\; \mathrm{char}\,k|m \\
	\s_{m} \oplus \spl_{2n} & \;\text{ if }\; \mathrm{char}\,k\nmid m.
	\end{cases}
	\end{align*}
\item If $m=1$ then 
	\begin{align*}
	\moM^{\left(1\right)} & \cong \spl_{2n} \ltimes \bV_{\spl_{2n}} \\
	\moM^{\left(2\right)} & = \moM^{\left(1\right)}.
	\end{align*}
If $m\ge 2$ then 
	\begin{align*}
	\moM^{\left(1\right)} & \cong \left(\s_{m} \oplus \spl_{2n}\right) \ltimes \mathrm{Hom}\left(\bV_{\spl_{2n}},\bV_{\s_{m}}\right) \\
	\moM^{\left(2\right)} & = \moM^{\left(1\right)}.
	\end{align*}
\item $\moM$ is not solvable.
\end{enumerate}
\end{prop}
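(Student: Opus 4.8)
The plan is to obtain parts (a), (b) and (d) directly from Theorem~\ref{thm: gen} together with the structure theory of $\gl_m$, $\s_m$ and $\spl_{2n}$ over a field of characteristic $\ne 2$ recorded in Appendix~\ref{sec: bg}, and to treat the derived series in (c) by the same two-step argument used for Proposition~\ref{prop: OP2}, the one genuinely new ingredient being Proposition~\ref{prop: gV}\eqref{gV: spl}.

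First I would assemble the needed facts about the building blocks: for $\mathrm{char}\,k\ne 2$ the symplectic Lie algebra $\spl_{2n}$ is simple, so $\rad(\spl_{2n})=0$, $\spl_{2n}^{\ssimp}=\spl_{2n}$, $\spl_{2n}$ is perfect, and $\spl_{2n}$ is not solvable; meanwhile $\rad(\gl_m)=k\bI_m\cong\gl_1$, $\gl_m^{\left(1\right)}=\s_m$, and $\gl_m^{\ssimp}$ equals $\s_m$ or $\pgl_m$ according as $\mathrm{char}\,k\nmid m$ or $\mathrm{char}\,k\mid m$ (with $\gl_1^{\ssimp}=0$). Given these, (b) is immediate from Theorem~\ref{thm: gen}\eqref{gen: ss}, which yields $\moM^{\ssimp}\cong\gl_m^{\ssimp}\oplus\spl_{2n}^{\ssimp}=\gl_m^{\ssimp}\oplus\spl_{2n}$, the displayed case $m\ge 2$ following by substituting the two possibilities for $\gl_m^{\ssimp}$. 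For (a), Theorem~\ref{thm: gen}\eqref{gen: rad} gives $\rad(\moM)\cong(k\bI_m\oplus 0)\ltimes\mathrm{Hom}(\bV_{\spl_{2n}},\bV_{\gl_m})$, and restricting the $(\gl_m\oplus\spl_{2n})$-module $\mathrm{Hom}(\bV_{\spl_{2n}},\bV_{\gl_m})$ along $k\bI_m\hookrightarrow\gl_m$ produces a module on which $\spl_{2n}$ acts trivially and $\bI_m$ acts as the scalar $1$, i.e. a direct sum of copies of $k_1$, giving the asserted form. For (d), Theorem~\ref{thm: gen}\eqref{gen: isom} writes $\moM$ as $(\gl_m\oplus\spl_{2n})\ltimes V$ with $V$ abelian; since $n\ge 1$ the factor $\gl_m\oplus\spl_{2n}$ contains the nonsolvable ideal $\spl_{2n}$ and so is not solvable, whence $\moM$ is not solvable by Proposition~\ref{prop: sd}\eqref{sd: sol}.

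For (c) I would first compute $\moM^{\left(1\right)}$ from Theorem~\ref{thm: gen}\eqref{gen: der}: as $\spl_{2n}$ is perfect, $\spl_{2n}^{\left(1\right)}=\spl_{2n}$, so $\moM^{\left(1\right)}\cong(\s_m\oplus\spl_{2n})\ltimes\mathrm{Hom}(\bV_{\spl_{2n}},\bV_{\s_m})$. For $m\ge 2$ this is already the stated form. For $m=1$ one has $\s_1=0$ and $\bV_{\s_1}=k$ with trivial action, so $\mathrm{Hom}(\bV_{\spl_{2n}},\bV_{\s_1})=\bV_{\spl_{2n}}^{*}$, and invariance of the symplectic form gives $\bV_{\spl_{2n}}^{*}\cong\bV_{\spl_{2n}}$ as $\spl_{2n}$-modules, hence $\moM^{\left(1\right)}\cong\spl_{2n}\ltimes\bV_{\spl_{2n}}$. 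It then remains to see that the derived series stabilises at step one. For $m\ge 2$ this is condition~(ii) of Theorem~\ref{thm: gen}\eqref{gen: der2}, which gives $\moM^{\left(2\right)}\cong(\s_m^{\left(1\right)}\oplus\spl_{2n}^{\left(2\right)})\ltimes\mathrm{Hom}(\bV_{\spl_{2n}^{\left(2\right)}},\bV_{\s_m^{\left(1\right)}})$, equal to $\moM^{\left(1\right)}$ since $\s_m$ (for $m\ge 2$, $\mathrm{char}\,k\ne 2$) and $\spl_{2n}$ are perfect. For $m=1$ I would use condition~(iii) instead: it holds for all $i\ge 2$ because $\spl_{2n}^{\left(i-1\right)}\bV_{\spl_{2n}}=\spl_{2n}\bV_{\spl_{2n}}=\bV_{\spl_{2n}}$ by Proposition~\ref{prop: gV}\eqref{gV: spl}, and then $\s_1^{\left(i-1\right)}=0$ and $\spl_{2n}^{\left(i\right)}=\spl_{2n}$ give $\moM^{\left(i\right)}\cong\moM^{\left(1\right)}$ for all $i\ge 1$. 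Equivalently, in both cases $\moM^{\left(1\right)}$ is perfect by Proposition~\ref{prop: sd}\eqref{sd: perf}, since $\s_m\oplus\spl_{2n}$ is perfect and acts with full span on $\mathrm{Hom}(\bV_{\spl_{2n}},\bV_{\s_m})$, which is a direct sum of copies of the standard modules of $\s_m$ and of $\spl_{2n}$.

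I do not anticipate a serious obstacle: the proposition is simply the specialisation of Theorem~\ref{thm: gen} to $\moN=\spl_{2n}$ with $\mathrm{char}\,k\ne 2$. The only points needing a little attention are the degenerate case $m=1$, where the $\s_m$ summand vanishes and the $\mathrm{Hom}$-term must be recognised as $\bV_{\spl_{2n}}^{*}\cong\bV_{\spl_{2n}}$, and the appeal to Proposition~\ref{prop: gV}\eqref{gV: spl} for the spanning property used to stabilise the derived series in that case; both are elementary.
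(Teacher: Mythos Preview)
Your proposal is correct and follows essentially the same approach as the paper: parts (a), (b), (d) by direct specialisation of Theorem~\ref{thm: gen} combined with the Appendix facts on $\gl_m$ and $\spl_{2n}$, and part (c) via Theorem~\ref{thm: gen}\eqref{gen: der},\eqref{gen: der2} together with Proposition~\ref{prop: gV}\eqref{gV: spl}. Your treatment of (c) is in fact more careful than the paper's terse proof, which cites \eqref{gV: I} rather than \eqref{gV: spl} (an apparent typo) and does not spell out the $m=1$ identification $\mathrm{Hom}(\bV_{\spl_{2n}},k)\cong\bV_{\spl_{2n}}^{*}\cong\bV_{\spl_{2n}}$.
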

% >>>
% proof <<<
\begin{proof}
(b) and (d) follow from Theorem \ref{thm: gen}\eqref{gen: ss} and the radical and semisimple quotient of $\gl_{m}$ \S \ref{sec: charn2glsl} and $\spl_{2n}$ \S \ref{sec: charn2spl}.
\item (a) For any $n,m\ge 1$ it holds that $\rad\left(\spl_{2n}\right)=0$ \S \ref{sec: charn2spl} and $\rad(\gl_{m}) = k\bI_{m}$ \S \ref{sec: charn2glsl}, and the restriction of $\mathrm{Hom}\left(\bV_{\spl_{2n}},\bV_{\gl_{m}}\right)$ to $k\bI_{m} \cong \gl_{1}$ is isomorphic to $k_{1}^{\oplus mn}$.
The statement then follows from Theorem \ref{thm: gen}\eqref{gen: rad}.

\item (c) The first statement follows from Proposition \ref{prop: gV}\eqref{gV: I} and Theorem \ref{thm: gen}\eqref{gen: der2}.
The second statement follows from Theorem \ref{thm: gen}\eqref{gen: der2} and the fact that if $m\ge 2$ and $n\ge 3$ then $\s_{m}$ and $\spl_{2n}$ are perfect \S \ref{sec: charn2glsl}, \S \ref{sec: charn2spl}.
\end{proof}
% >>>

% end section >>>

% Appendix: background <<<
\begin{appendix} \section{The Lie algebras $\gl_{d}$, $\s_{d}$, $\spl_{2n}$, $\so_{n}$ and $\mo(D)$} \label{sec: bg}
In this Appendix we collect a number of statements on the structure of the classical Lie algebras $\gl_{n}$ and $\s_{n}$, and the Lie algebras $\spl_{2n}$, $\so_{n}$ and $\moD$ for $D \in k^{n \times n}$ a nonsingular diagonal matrix. 
These results are well known for $\mathrm{char}\,k \ne 2$, but perhaps less well known in the $\mathrm{char}\,k = 2$ situation.
We state the relevant results from the following references below:
\begin{itemize}
\item The structure of $\gl_{n}$ and $\s_{n}$, and for $\mathrm{char}\,k \ne 2$ that of $\so_{n}$ and $\spl_{2n}$ is well known \cite{HumphreysBook72,PremetS06}.
\item Hogeweij \cite{Hogeweij82} determined the structure of the Chevalley algebras over a field of arbitrary characteristic $p>0$.
These Lie algebras are mostly simple, except for $\s_{n}$ when $p|n$, and some exceptions for low $p$ including $\spl_{2n}$ when $p=2$, which corresponds to the `$\mathrm{C}_{l}$, $p=2$, universal type' entry in \cite[Table 1]{Hogeweij82}. 
The structure of $\spl_{2n}$ is also determined in \cite[\S 12]{ChaktouraS14}.
\item The structure of $\moD$ is determined by Chaktoura and Szechtman \cite[\S 8, 9, 11]{ChaktouraS14}.
Note that in loc.\ cit.\ the Lie algebra which we denote by $\moI$ is denoted by $\so_{n}$.
\end{itemize}
The Appendix ends with Remarks \ref{rem: schemes} and \ref{rem: chev} in which, for the case $\mathrm{char}\, k = 2$, we explain the relation  between the Lie algebras $\moI$, $\mo_{n}$ and $\so_{n}$, the related group schemes, and the Chevalley algebras.
In particular these remarks explain why we do not denote $\moI$ by $\mo_{n}$ or $\so_{n}$.
%
% rem: errors <<<
\begin{remi}
There are some errors in the literature regarding the structure of the Lie algebras $\moM$.
The second Lemma in the preprint \cite[\S 6.4]{Lebedev06} incorrectly states that $\mo\left(\bI_{4}\right)^{\left(1\right)}$ is simple.
As stated in \cite[Note 7.9]{ChaktouraS14} there are two errors in Bourbaki \cite[Chapter I, \S6, Exercise 25(b)]{BourbakiBook89} regarding the ideal structure of $\spl_{2n}$ in characteristic $2$.
The statement in loc.\ cit.\ that the only non-zero proper ideals of $\spl_{2n}^{\left(1\right)}$ are $\spl_{2n}^{\left(2\right)}$ and $k\bI_{2n}$ is correct.
\end{remi}
% >>>
% char not = 2 <<<
\subsection{The $\mathrm{char}\,k \ne 2$ case}
\label{sec: charn2}
Throughout \S \ref{sec: charn2} we assume that $\mathrm{char}\, k \ne 2$.
\subsubsection{$\gl_{n}$, $\s_{n}$, $n\ge 2$ \cite[\S 3.1]{PremetS06} }
\label{sec: charn2glsl}
The derived subalgebra $\gl_{n}^{\left(1\right)} = \s_{n}$ and $\s_{n}$ is perfect.
The radical of $\gl_{n}$ is $\rad\left(\gl_{n}\right) = k\bI_{n}$.
The semisimple quotient $\pgl_{n} := \gl_{n} / k\bI_{n}$ is isomorphic to $\s_{n}$ if and only if $\mathrm{char}\,k \nmid m$. 
The Lie algebra $\s_{n}$ is simple if $\mathrm{char}\,k\nmid m$, otherwise $\rad\left(\s_{n}\right) = k\bI_{n}$ and $\psl_{n} := \s_{n} / k\bI_{n}$ is simple.
\subsubsection{$\mo(D)$, $n\ge 1$ \cite[\S 7,8,9]{ChaktouraS14} }
\label{sec: charn2oD}
Let $D \in k^{n\times n}$ be a nonsingular diagonal matrix.
The Lie algebra $\mo(D)$ consists of the $n\times n$ matrices $X$ satisfying $D_{ii}X_{ij}+D_{jj}X_{ji}=0$ for all $1\le i,j\le n$.
The Lie algebra $\mo(D)$ is zero if $n=1$, one dimensional and abelian if $n = 2$, and simple if $n=3$ or $n \ge 5$.
If $n=4$ then $\mo(D)$ is simple if $\mathrm{det} \, D \notin k^{\times 2}$, and the direct sum of two simple ideals if $\mathrm{de} \, D \in k^{\times 2}$.
Note that if every element in $k$ is a square then $D$ is congruent to $\bI_{n}$ and $\mo(D)$ is isomorphic to $\moI$.
\subsubsection{$\spl_{2n}$, $n\ge 1$ \cite[\S I]{HumphreysBook72} }
\label{sec: charn2spl}
The symplectic Lie algebra $\spl_{2n}$ is simple for any $n \ge 1$.
% end subsection >>>
% char = 2 <<<
\subsection{The $\mathrm{char}\,k = 2$ case}
\label{sec: char2}
Throughout \S \ref{sec: char2} we assume that $\mathrm{char}\,k = 2$. 
\subsubsection{$\gl_{n}$, $\s_{n}$, $n\ge 2$ \cite[\S 3.1]{PremetS06} }
\label{sec: char2glsl}
The derived subalgebra $\gl_{n}^{\left(1\right)} = \s_{n}$.
For $n=2$ the Lie algebras $\gl_{n}$ and $\s_{n}$ are solvable.
Assume then that $n \ge 3$.
The radical of $\gl_{n}$ is $\rad\left(\gl_{n}\right) = k\bI_{n}$ and the semisimple quotient is $\pgl_{n} := \gl_{n} / k\bI_{n}$.
The Lie algebra $\pgl_{n}$ is isomorphic to $\s_{n}$ if and only if $2\nmid n$. 
The Lie algebra $\s_{n}$ is simple if and only if $2 \nmid  n$, otherwise $\rad\left(\s_{n}\right) = k\bI_{n}$ and the semisimple quotient $\psl_{n} := \s_{n} / k\bI_{n}$ is simple.
\subsubsection{$\mo(D)$ \cite[\S 7,8,11]{ChaktouraS14}}
\label{sec: char2oD}
Suppose that $D \in k^{n\times n}$ is a nonsingular diagonal matrix.
The Lie algebra $\mo(D)$ consists of the $n\times n$ matrices $X$ satisfying $D_{ii}X_{ij}+D_{jj}X_{ji}=0$ for all $1\le i,j\le n$.
The derived subalgebra $\mo(D)^{\left(1\right)}$ consists of those elements of $\mo(D)$ with zeroes on the diagonal.
The Lie algebra $\mo(D)^{\left(1\right)}$ is solvable if $n\le 2$ and simple if $n=3$ or $n\ge 5$.
If $n=4$ and $\mathrm{der} \, D$ is not a square in $k$ then $\mo(D)$ is simple.
If $n=4$ and $\mathrm{det} \, D$ is a square in $k$ (e.g.\ if $k$ is algebraically closed) then $\mo(D)$ is perfect and equal to the semidirect product of a three dimensional simple subalgebra and an abelian ideal.
If $n\ge 3$ then $\rad\left(D\right) = k\bI_{n}$.
Note that if every element in $k$ is a square then $D$ is congruent to $\bI_{n}$ and $\mo(D)$ is isomorphic to $\moI$.
\subsubsection{$\spl_{2n}$ \cite[Table 1]{Hogeweij82},\cite[\S 12]{ChaktouraS14} }
\label{sec: char2spl}
The Lie algebra $\spl_{2n}$ consists of the $2n\times 2n$ matrices of the form
\begin{equation}
	\label{eqn: spnmat}
	\begin{pmatrix}
	A&B\\
	C&A^{t}
	\end{pmatrix}
\end{equation}
with $A\in \gl_{n}$ and $B,C$ antisymmetric.
The derived subalgebra $\spl_{2n}^{\left(1\right)}$ resp.\ $\spl_{2n}^{\left(2\right)}$ consists of all matrices of the form \eqref{eqn: spnmat} with $A\in \gl_{n}$ and $B,C$ alternating (antisymmetric with zeroes on the diagonal), resp.\ $A\in \s_{n}$ and $B,C$ alternating.
For $n \le 2$ the Lie algebra $\spl_{2n}$ is solvable.
For $n\ge 3$ the Lie algebra $\spl_{2n}^{\left(2\right)}$ is perfect, and $\spl_{2n}^{\left(2\right)}$ resp.\ $\spl_{2n}^{\left(2\right)}/k\bI_{n}$ is simple if $n$ is odd resp.\ even.
For $m\ge 3$ the radical of $\spl_{2n}$ is $\rad\left(\spl_{2n}\right) = k\bI_{2n}$.
(In \cite{Hogeweij82} the derived subalgebras of are given in terms of root data as $\spl_{2n}^{\left(1\right)} = \h + \mathfrak{e}_{S}$ and $\spl_{2n}^{\left(2\right)} = \h_{S} + \mathfrak{e}_{S}$, see loc.\ cit.\ for the notation.)
%
% rem <<<
\begin{remi}
\label{rem: rad}
The statements about the radicals of $\moI$ and $\spl_{2n}$ are made in \cite[\S 5]{AhmadiIS14} with proofs referenced to \cite{ChaktouraS14}.
As it is not immediately clear to us how these statements follow from the results in \cite{ChaktouraS14} we have provided a proof in the following Proposition. 
\end{remi}
% >>>
% prop <<<
\begin{prop}
\label{prop: rad}
Assume that $\mathrm{char}\,k = 2$, $n\ge 3$, and $D \in k^{n \times n}$ is nonsingular and diagonal.
\begin{enumerate}
\item $\rad(\mo(D)) = Z(\mo(D)) = k\bI_{n}$.
\item $\rad(\spl_{2n}) = Z(\spl_{2n}) = k\bI_{2n}$.
\end{enumerate}
\end{prop}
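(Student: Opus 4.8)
The plan is to prove, for both $\mo(D)$ and $\spl_{2n}$, the chain of equalities $k\bI \subseteq Z(\g) \subseteq \rad(\g) \subseteq k\bI$, which forces all three to coincide. The first inclusion is trivial: the identity matrix is central in $\gl$ and lies in both $\mo(D)$ and $\spl_{2n}$ (using $\mathrm{char}\,k=2$ so that $\bI^tM+M\bI=2M=0$), and $Z(\g)\subseteq\rad(\g)$ holds for any Lie algebra. So the whole content is the final inclusion $\rad(\g)\subseteq k\bI$.

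For this I would argue as follows. Since extension of scalars to the algebraic closure $\bar k$ commutes with both $\rad$ and with forming $\mo(D)$ and $\spl_{2n}$ (the latter because the defining equations are linear), and since $k\bI$ is visibly preserved, it suffices to treat $k=\bar k$; in particular we may take $D=\bI_n$. Now $\rad(\g)$ is a solvable ideal, hence is killed in the semisimple quotient $\g^{\ssimp}$. Using the structure results recalled in \S\ref{sec: char2oD} and \S\ref{sec: char2spl}: for $\mo(\bI_n)$ with $n=3$ or $n\ge5$ the derived subalgebra $\mo(\bI_n)^{(1)}$ is simple, and for $\spl_{2n}$ with $n$ odd $\spl_{2n}^{(2)}$ is simple while for $n$ even $\spl_{2n}^{(2)}/k\bI$ is simple; in all cases one reads off that the semisimple quotient is a simple Lie algebra of the expected dimension and that $k\bI$ is exactly the kernel of the quotient map restricted to the relevant derived subalgebra. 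Thus $\rad(\g)$ is contained in the (at most one-dimensional) space spanned by $\bI$ together with, a priori, the complement of $\g^{(1)}$ in $\g$. The case $n=4$ needs separate attention since $\mo(\bI_4)$ is then a semidirect product of a $3$-dimensional simple algebra and an abelian ideal of dimension $3$; here one checks directly that the simple factor acts on the abelian ideal without nonzero invariants except the span of $\bI$, so again the radical is $k\bI$.

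Concretely, rather than quoting the ideal-lattice results wholesale, I would give a direct computation: let $X\in\rad(\g)$. Decompose $X$ in the standard basis $E_{ij}$ and use that $[\g,X]\subseteq\rad(\g)$ together with $\rad(\g)^{(1)}=0$ (solvability, in fact $\rad$ is abelian-by-nilpotent but for the bound we only need $[\rad,\rad]\subsetneq\rad$). Bracketing $X$ with the diagonal torus elements of $\mo(\bI_n)^{(1)}$ resp.\ $\spl_{2n}^{(2)}$ shows that all off-diagonal entries of $X$ lie in a space on which the adjoint action is nontrivial, forcing them into $\rad(\g)$ in a way that contradicts solvability unless they vanish; a parallel bracket computation with the matrices $X_i$ (type $E_{ii}$ on the $\spl$ side, type $E_{1j}+E_{j1}$ on the $\mo$ side) exhibited in the proof of Proposition~\ref{prop: gV} pins down the diagonal part of $X$ up to a scalar multiple of $\bI$. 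This is essentially the same mechanism as in Proposition~\ref{prop: gV}: the vector representation is (almost) irreducible, so the only ad-invariant lines are spanned by scalars.

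The main obstacle is the characteristic-$2$ bookkeeping in the two genuinely non-simple situations, $\mo(\bI_4)$ and $\spl_{2n}$ with $n$ even, where the naive "semisimple quotient is simple, so radical is the obvious line" argument does not apply verbatim and one must instead verify directly that the abelian ideal (respectively $k\bI_{2n}\subseteq\spl_{2n}^{(2)}$, and the $3$-dimensional abelian ideal in $\mo(\bI_4)$) contributes nothing new to the radical beyond $k\bI$. Everything else is a routine matrix computation with the explicit description of $\spl_{2n}$ in \eqref{eqn: spnmat} and of $\mo(D)^{(1)}$ as the trace-free-diagonal matrices in $\mo(D)$.
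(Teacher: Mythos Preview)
Your chain $k\bI \subseteq Z(\g) \subseteq \rad(\g) \subseteq k\bI$ is the right framework, and the reduction to $\bar k$ is harmless for the direction you need (if $\rad(\g)\otimes K \subseteq K\bI$ then $\rad(\g)\subseteq k\bI$). But the argument you give for the last inclusion has a genuine gap.

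The sentence ``one reads off that the semisimple quotient is a simple Lie algebra of the expected dimension'' is circular: identifying $\g^{\ssimp}=\g/\rad(\g)$ is precisely the statement you are trying to prove. What you actually extract from \S\ref{sec: char2oD}--\S\ref{sec: char2spl} is only that certain \emph{derived} subalgebras are simple or perfect; that does not by itself bound $\rad(\g)$, since, as you yourself note, $\rad(\g)$ could a priori live in the complement of $\g^{(1)}$, which for $\mo(D)$ is $n$-dimensional. Your proposal then falls back on a direct matrix computation, but that sketch also has a slip: in characteristic $2$ the derived subalgebra $\mo(\bI_n)^{(1)}$ consists of matrices with zeroes on the diagonal (\S\ref{sec: char2oD}), so it contains no ``diagonal torus elements'' to bracket against. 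You would have to bracket with diagonal elements of $\mo(\bI_n)$ itself, and then separately argue that the resulting constraints force a scalar---doable, but not what you wrote, and it makes the $n=4$ case messier than you indicate (note also that $\bI_4 \notin \mo(\bI_4)^{(1)}$, so it does not sit inside the abelian ideal you describe).

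The paper's argument is the clean version of the idea you are circling. For part (a): if $I\subseteq\mo(D)$ is any solvable ideal, then $[\mo(D),I]$ is a solvable ideal of $\mo(D)$ contained in $\mo(D)^{(1)}\cap I$. Now invoke the structure of $\mo(D)^{(1)}$: it is simple for $n=3$ or $n\ge 5$, and for $n=4$ it is perfect and irreducible as an $\mo(D)$-module, so in every case it contains no nonzero solvable $\mo(D)$-ideal. Hence $[\mo(D),I]=0$, i.e.\ $I\subseteq Z(\mo(D))$, and one checks $Z(\mo(D))=k\bI_n$ by hand. This single observation $[\g,I]\subseteq \g^{(1)}\cap I$ replaces your entire direct computation and handles $n=4$ uniformly. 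For part (b) the paper simply reads off the ideal lattice of $\spl_{2n}$ from Hogeweij's table (or from the fact that $\spl_{2n}^{(2)}$ has a unique nonzero proper ideal $k\bI_{2n}$ when $n$ is even and none when $n$ is odd), rather than computing.
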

% >>>
% proof <<<
\begin{proof}
(a) Suppose that $I \subseteq \mo(D)$ is a solvable ideal.
Then $\left[\mo(D),I\right] \subseteq \mo(D)^{\left(1\right)} \cap I$ is a solvable ideal that is contained in $\mo(D)^{\left(1\right)}$.
By \cite[Thm. 1.2]{ChaktouraS14}, if $n=3$ or $n\ge 5$ then $\mo(D)^{\left(1\right)}$ is simple, and by Proposition 11.2 in loc.\ cit.\ $\mo(D)^{\left(1\right)}$ is perfect and irreducible as an $\mo(D)$ module.
It follows that $\mo(D)^{\left(1\right)}$ contains no non-zero solvable ideals of $\mo(D)$ and therefore $\left[\mo(D),I\right] = 0$ and $I\subseteq Z\left(\mo(D)\right)$.
It is straightforward to check that $Z\left(D\right) = k\bI_{n}$.

(b) It follows from the `$\mathrm{C}_{l}$ of universal type entry' in \cite[Table 1]{Hogeweij82} that the only non-zero solvable ideal in $\spl_{2n}$ is $Z(\spl_{2n}) = k\bI_{2n}$.  
The statement can also be deduced from the fact that the ideal $\spl_{2n}^{\left(2\right)}$ has codimension one in $\spl_{2n}^{\left(1\right)}$ and \cite[Thm.\ 12.1]{ChaktouraS14} which shows that $\spl_{2n}^{\left(2\right)}$ is simple if $n$ is odd and contains a single non-zero proper ideal $k\bI_{2n}$ is $n$ if even.
\end{proof}
% >>>
% rem: schemes <<<
\begin{remi}{\emph{Group schemes.}}
\label{rem: schemes}
The Lie algebra $\moI$ is isomorphic to the Lie algebra of the group scheme over $k$ with group of $R$-points $\left\{ g \in \mathrm{GL}_{n}\left(R\right) \; | \; g^{t}g = \bI_{n} \right\}$ for $R$ a $k$-algebra (see \cite{DemazureGBook80,MilneBook17} for generalities on group schemes).
This group scheme is in general not isomorphic to a (special) orthogonal group and is not smooth \cite[\S VI Exercise 15]{KnusMRT98},\cite[Exercise 24.2]{MilneBook17}.

The Lie algebra $\moI$ is in general \emph{not} isomorphic to the Lie algebra $\mo_{n}$ of the orthogonal group scheme $\mathrm{O}_{n}$ or to the Lie algebra $\so_{n}$ of the special orthogonal group scheme $\mathrm{SO}_{n}$.
These group schemes are defined using nondegenerate \emph{quadratic forms} rather than bilinear forms \cite[\S 21j \& \S 24i]{MilneBook17}.
In the even case $n=2m$ the group schemes $\mathrm{O}_{2m}$ and $\mathrm{SO}_{2m}$ are smooth, their Lie algebras coincide, and there is a monomorphism $\mathrm{SO}_{2m} \to \mathrm{Sp}_{2m}$ to which the Lie functor associates an isomorphism of $\so_{2m}$ onto the ideal $\spl_{2m}^{(1)}$ in $\spl_{2m}$.
In the odd case $n=2m+1$ the group scheme $\mathrm{O}_{2m+1}$ is non-smooth, $\mathrm{SO}_{2m+1}$ is its reduced subgroup, $\so_{2m+1}$ has codimension one in $\mo_{2m+1}$, and there is a surjective isogeny $\mathrm{SO}_{2m+1} \to \mathrm{Sp}_{2m}$ to which the Lie functor associates a surjective homomorphism from $\so_{2m+1}$ onto $\spl_{2m}^{(1)}$, the kernel of which is abelian of dimension $2m$.
For details see \cite[\S A]{ConradAGINotes},\cite[\S 3,4]{AcharHS11}, and compare the calculation of $\so_{2n}$ in the proof of \cite[Prop.\ A.2.3]{ConradAGINotes} with the description of $\spl_{2n}^{(1)}$ in \S \ref{sec: char2spl} above.
Note that in some older references e.g.\ Borel \cite[\S 23.6]{BorelBook91} and Hesselink \cite{Hesselink79}, $\mathrm{O}_{n}$ denotes the associated `classical' algebraic subgroup of $\mathrm{GL}_{n}(k)$ and corresponds to the reduced subgroup of the orthogonal group scheme, which in the odd case $n=2m+1$ has Lie algebra $\so_{2m+1}$ not $\mo_{2m+1}$.
\end{remi}
% >>>
% rem: Chevalley algebras <<<
\begin{remi}{\emph{Chevalley algebras.}}
\label{rem: chev}
The Lie algebras $\moI$ and $\moI^{(1)}$ are in general \emph{not} isomorphic to any of the Chevalley algebras of type $\mathrm{B}$ or $\mathrm{D}$. The latter are constructed as $\mathrm{mod}\;2$ reductions of integral forms of the simple complex Lie algebras $\so_{2n+1}$ and $\so_{2n}$ \cite[\S 25.4]{HumphreysBook72}.
The Chevalley algebras of these types have a non-zero centre and satisfy $\mathrm{dim}\; \g/\g^{\left(1\right)} \le 2$ \cite{Hogeweij82}, whereas $\moI^{\left(1\right)}$ is simple if $n=3$ or $n\ge 5$ and $\mathrm{dim} \left( \moI/\moI^{\left(1\right)} \right)= n$.
\end{remi}
% >>>
% end subsection >>>
\end{appendix}
% >>>

% Bibliography <<<
\bibliography{mybib}{}
\bibliographystyle{siam}
% >>>

% Address <<<
\vspace{0.5cm}
\noindent James Waldron, School of Mathematics, Statistics and Physics, Newcastle University, Newcastle upon Tyne, NE1 7RU, UK.
\newline
\noindent Email address: james.waldron@newcastle.ac.uk
% >>>

\end{document}